\newtheorem{theorem}{Theorem}[section]
\newtheorem{proposition}[theorem]{Proposition}
\newtheorem{lemma}[theorem]{Lemma}
\theoremstyle{definition}    
\theoremstyle{remark}
\newtheorem{remark}[theorem]{Remark}
\newtheorem{example}[theorem]{Example}
\newtheorem{examples}[theorem]{Examples}
\newcommand{\sD}{\mathsf{D}}
\newcommand{\sC}{\mathsf{C}}
\newcommand\G{{G}}
\renewcommand{\L}{\mathcal{L}}
\newcommand{\ca}{\mathcal}
\newcommand{\F}{\mathcal{F}}
\newcommand{\R}{\mathbb{R}}
\newcommand\pt{\on{pt}}
\newcommand{\id}{\on{id}}
\newcommand\lie[1]{\mathfrak{#1}}
\renewcommand{\k}{\mathsf{k}}
\newcommand{\h}{\mathsf{h}}
\newcommand{\g}{\lie{g}}
\renewcommand{\a}{\mathsf{a}}
\newcommand{\on}{\operatorname}
\renewcommand{\ker}{ \on{ker}}
\newcommand{\VE}{\on{VE}}
\newcommand{\sz}{\mathsf{s}}
\newcommand{\tz}{\mathsf{t}}
\newcommand\qu{/\kern-.7ex/} % Categorical quotients
\newcommand{\lra}{\longrightarrow}
\newcommand{\hra}{\hookrightarrow}
\renewcommand{\d}{{\mathsf{d}}}
\newcommand\eps{\epsilon}
\newcommand{\eeq}{\end{eqnarray*}}
\newcommand{\beq}{\begin{eqnarray*}}
\newcommand{\D}{\ca{D}}
\newcommand{\pr}{\on{pr}}
\newcommand{\wh}{\widehat}
\newcommand{\wt}{\widetilde}
\newcommand{\mf}{\mathfrak}
\newcommand{\rra}{\rightrightarrows}
\renewcommand{\subset}{\subseteq}
\newcommand{\btc}{\begin{tcolorbox}}
\newcommand{\etc}{\end{tcolorbox}}
\renewcommand{\sp}{\mathsf{p}}
\newcommand{\sq}{\mathsf{q}}
\newcommand{\si}{{\mathsf{i}}}
\newcommand{\sj}{{\mathsf{j}}}
\newcommand{\Ra}{\Rightarrow}
\newcommand{\sX}{\mathsf{X}}
\newcommand{\sY}{\mathsf{Y}}
\begin{document}
\title{Van Est differentiation and integration}

%\section{Introduction}
\author{Eckhard Meinrenken}\address{Department of Mathematics, University of Toronto (Canada)}\email{mein@math.toronto.edu}\author{Maria Amelia Salazar}\address{Instituto de Matematica e Estatistica,  Universidade Federal Fluminense (Brazil)}\email{mariasalazar@id.uff.br}

\begin{abstract}
The classical Van Est theory relates the smooth cohomology of Lie groups with the cohomology of the associated Lie algebra, or its relative versions. Some aspects 
of this theory generalize to Lie groupoids and their Lie algebroids. In this paper, continuing an idea from \cite{lib:ve}, we revisit the van Est theory using the \emph{Perturbation Lemma} from homological algebra. Using this technique, we obtain precise results for  the van Est differentiation and integrations maps \emph{at the level of cochains}. Specifically, we construct 
homotopy inverses to the van Est differentiation maps that are right inverses at the cochain level. 
\end{abstract}
\maketitle
\tableofcontents

\section{Introduction}
In a series of papers \cite{vanest:gro,vanest:alg,vanest:appl} in the early 1950s, Willem van Est 
established several key facts relating the smooth group cohomology  of a Lie group $G$ to the cohomology of its associated Lie algebra $\g$. One of his results describes a cochain map $\VE_G$ from the Lie group complex to the Chevalley-Eilenberg Lie algebra complex, which induces an isomorphism in cohomology  up to a certain degree depending on the connectivity properties of $G$. (Using a localized complex, working with germs near the group unit, it induces  an isomorphism in all degrees \cite{hou:int,swi:coh}; see also \cite{hu:coh}.) Furthermore, van Est proved that  the smooth group cohomology of a connected Lie group $G$ is canonically isomorphic to the relative Lie algebra cohomology of $\g$ with respect to the maximal compact subgroup $K$ on $G$. An explicit cochain map from the relative Lie algebra complex $\sC^\bullet(\g,K)=\sC^\bullet(\g)_{K-\on{basic}}$ to the complex  $\sC^\bullet(G)$ was described later by Dupont \cite{dup:sim}, Shulman-Tischler \cite{shu:lea}, and Guichardet \cite{gui:coh}; see \cite{dup:pro,gui:sur,hou:int,swi:coh} for  applications and generalizations. The van Est map was extended by Weinstein-Xu \cite{wei:ext} to Lie groupoids $G\rra M$, as a cochain map $\VE_G\colon \sC^\bullet(G)\to \sC^\bullet(A)$
from the smooth groupoid cochain complex to the Chevalley-Eilenberg complex of its Lie algebroid $A=\on{Lie}(G)$. Versions of the van Est theorems for Lie groupoids were obtained by Crainic \cite{cra:dif}. More recently, an explicit homotopy inverse
\[ R_G\colon \sC^\bullet(A)\to \sC^\bullet(G)_M\]
(where the subscript indicates the localized complex)
was found by Cabrera-Marcut-Salazar \cite{cab:loc}. 

In this article, we will revisit the van Est theory using the \emph{Perturbation Lemma} from homological algebra. For the map $\VE_G$, this was initiated by Li-Bland and Meinrenken in \cite{lib:ve}, but we will show that it carries much further. In short, this approach constructs the cochain maps in the van Est theory \emph{systematically},  from homotopy operators on various double complexes (as opposed to `guessing' the right formulas). The properties of these cochain maps are 
obtained from properties of these homotopy operators. This leads to a number of  observations that were  missed in earlier literature. All our results 
apply to cochain groups with coefficients in a given $G$-representation $V$, but for simplicity we will only describe 
the scalar case in the following summary:\medskip

{\bf Van Est theory for Lie groups.} We begin by revisiting 
the classical setting that $G$ is a Lie group, and  $K$ a compact Lie subgroup. We describe a distinguished horizontal homotopy operator on the  van Est double complex, and use it to obtain a canonical van Est differentiation map 
\[ \VE_{G/K}\colon \sC^\bullet(G)\to \sC^\bullet(\g)_{K-\on{basic}},\]
with values in the  relative Lie algebra complex. We will show that this relative 
van Est map is the composition 
\begin{equation}\label{eq:vek1}
\VE_{G/K}=\VE_G\circ \on{Av}\end{equation}
where $\on{Av}\colon \sC(G)\to \sC(G)$ is  given on degree $p$ elements by averaging under a natural $K^{p+1}$-action. 
If $G$ has finitely many components, and $K$ is a \emph{maximal} compact subgroup, then the diffeomorphism $G/K\cong \g/\mf{k}$ determines a vertical homotopy operator on the double complex, and a resulting cochain map (`integration')
\[ R_{G/K}\colon \sC^\bullet(\g)_{K-\on{basic}}\to \sC^\bullet(G).\]
This map is similar to (but not equal to) the  map defined in \cite{dup:sim,gui:coh,shu:lea}. 
Our theory shows that 
\begin{equation}\label{eq:ver}
\VE_{G/K}\circ R_{G/K}=\id
\end{equation} 
\emph{at the level of cochains}. Equations \eqref{eq:vek1} and \eqref{eq:ver} provide a strengthening of van Est's original results, which are stated at the level of cohomology. 
For arbitrary compact subgroups $K$ of $G$ (not necessarly maximal compact ones), we have a similar statement for the localized complex; in particular, this applies to $K=\{e\}$. 
\medskip

%\noindent
{\bf Van Est maps for Lie groupoids.} For a Lie groupoid $G\rra M$, with Lie algebroid $A=\on{Lie}(G)$,  it was shown in \cite{lib:ve} how recover the van Est differentiation map $\VE_G$ of \cite{wei:ext}, through applications of the Perturbation Lemma to the van Est double complex $\sD^{\bullet,\bullet}(G)$ from \cite{cra:dif}. Given a (germ of a) `tubular structure' for $G$, we also have a vertical homotopy $\k$ on the double complex. We will prove that 
the resulting van Est integration map $R_G\colon \sC(A)\to \sC(G)_M$, with values in the localized complex,  coincides with the integration map of  \cite{cab:loc}. The fact that $\VE_G,\,R_G$ are cochain maps, and that 
\[ \VE_G\circ R_G=\id\]
 on $\sC(A)$, are obtained as immediate consequences 
of the properties of $\h,\k$ and a general algebraic lemma, avoiding the calculations in 
\cite{wei:ext} and \cite{cab:loc}.\medskip

{\bf  Van Est maps for Lie groupoid actions.} Here we consider groupoid actions of $G\rra M$ on manifolds $Q$, with anchor map $\Phi\colon Q\to M$ a surjective submersion. The Lie algebroid complex $\sC(A)$  is generalized to a foliated de Rham complex $\Omega_\F(Q)^G$ of invariant leafwise forms along the fibers of $\Phi$. (For $Q=G$ with the left action, one recovers $\sC(A)$.) 
According to Crainic \cite{cra:dif}, if the action is proper, then the choice of a suitable `Haar distribution' on the action groupoid gives a horizontal homotopy operator $\h$ on a double complex $\sD^{\bullet,\bullet}(Q)$. In turn, using the Perturbation Lemma, this determines a differentiation map $\VE_Q$ from $\sC(G)$ to $\Omega_\F(Q)^G$. On the other hand, given a right inverse 
$M\hra Q$ to $\Phi$ and a tubular neighborhood embedding, one obtains a vertical homotopy $\k$ and hence an integration map $R_Q$ in the opposite direction, 
at least after localizing. We characterize situations where this integration map is right inverse to differentiation. 
\medskip

 Each of the three themes outlined above constitutes a section of this article; these sections are preceded by a quick review of the Perturbation Lemma, which will be our main tool throughout the paper.

\subsection*{Acknowledgments}  E.M. was supported by an NSERC Discovery Grant. This study was financed in part by the Coordena\c{c}\~ao de Aperfei\c{c}oamento de Pessoal de N\'ivel Superior - Brasil (CAPES) - Finance code 001. The authors would like to thank the hospitality of Fields Institute where some of this research was carried out.

\section{The Perturbation Lemma}\label{sec:perturbationlemma}
Let 
$(\sD^{\bullet,\bullet},\d,\delta)$ be a double complex, concentrated in non-negative degrees,

\[ \xymatrix{ & & \\    
	  \sD^{0,1} \ar[r]_\delta \ar[u]^d & \sD^{1,1}\ar[r]_\delta\ar[u]^d &           \\  \sD^{0,0} \ar[r]_\delta\ar[u]^d & \sD^{1,0}\ar[r]_\delta\ar[u]^d& \\
}\]
Let $(\sX^{\bullet},\d)$ be a cochain complex. A morphism of double complexes
\[ \si\colon \sX\to  \sD\]
 (where $\sX^\bullet$ is regarded as a double complex concentrated in bidegrees $(0,\bullet)$) will be called a \emph{horizontal augmentation map}. 
Passing to total complexes, $\si$ becomes a  cochain map from $X^\bullet$ to the cochain complex $(\on{Tot}^\bullet(\sD),\d+\delta)$. The Perturbation Lemma, due to Brown \cite{bro:twi} and  Gugenheim \cite{gug:cha}, 
allows us to turn a homotopy operator for the horizontal differential $\delta$ into a homotopy operator with respect to the total differential $\d+\delta$. 
%
%\begin{tcolorbox}
\begin{lemma}[Perturbation Lemma] \label{lem:perturbed}
		Suppose $\mathsf{h}\colon \sD\to \sD$ is a linear map of bidegree  $(-1,0)$, such that 
		\[ [\mathsf{h},\delta]=1-\si\circ \sp\] for some degree $0$ map 
		$\sp\colon \sD^{0,\bullet}\to \sX^\bullet$.
		Put 
		$\mathsf{h}'=\mathsf{h}(1+\d \mathsf{h})^{-1}$ and $ \sp'=\sp(1+\d \mathsf{h})^{-1}$. 
		Then 
		\[ [\mathsf{h}',\d+\delta]=1-\si\circ\sp'.\]
\end{lemma}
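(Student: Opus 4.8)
The plan is to read the stated formulas as identities between operators on $\sD$ and to verify $[\h',\d+\delta]=1-\si\circ\sp'$ by a direct computation, pushing the hypotheses around. First I would fix conventions and record the inputs. Since $\d+\delta$ is a differential on $\on{Tot}(\sD)$ we have $\d^2=\delta^2=0$ and $\d\delta+\delta\d=0$. Since $\si$ is a morphism of double complexes landing in the column $\sD^{0,\bullet}$, and $\sX$ carries only the vertical differential $\d$, commuting with the two differentials gives $\delta\si=0$ and $\d\si=\si\d$. The homotopy hypothesis $\delta\h+\h\delta=1-\si\sp$ I read as an identity on all of $\sD$, with $\sp$, and hence $\si\sp$, extended by zero off $\sD^{0,\bullet}$. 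Finally, since $\h$ has bidegree $(-1,0)$ and $\sD$ is concentrated in non-negative bidegrees, $\h$ vanishes on the column $\sD^{0,\bullet}$; this yields the two facts I will use repeatedly, namely $\h\si=0$ and, because then $\d\h\,\si=0$, also $N\si=\si$, where I write $N:=(1+\d\h)^{-1}$.

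Before computing I must check that $N$ exists. The operator $m:=\d\h$ has bidegree $(-1,1)$, so on $\sD^{p,q}$ its $(p+1)$-st power has negative horizontal degree and therefore vanishes; thus $m$ is locally nilpotent and $N=\sum_{k\geq 0}(-m)^k$ is a well-defined two-sided inverse of $1+m$, of total degree $0$. From $(1+m)N=1$ I get the relation $mN=1-N$, i.e.\ $\d\h\,N=1-N$, which will do most of the work; I also note the equivalent form $\h'=\h N=(1+\h\d)^{-1}\h$, coming from $\h(1+\d\h)=(1+\h\d)\h$.

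The core of the proof is to expand $[\h',\d+\delta]=(\d+\delta)\h N+\h N(\d+\delta)$ and collapse it. Rewriting the horizontal term via $\delta\h=1-\si\sp-\h\delta$ gives $\delta\h N=N-\si\sp'-\h\delta N$, while the vertical term is $\d\h N=1-N$; substituting and regrouping, the four terms reorganize into
\[ [\h',\d+\delta]=1-\si\sp'+\h\bigl(N\d+N\delta-\delta N\bigr),\]
so everything comes down to showing the residual bracket is annihilated by $\h$. The step I expect to be the main obstacle is precisely that $\delta$ does not commute with the Neumann series $N$, so I cannot simply slide it through. I would resolve this by conjugating a first-order relation: from the anticommutation of $\d,\delta$ and the homotopy hypothesis one computes $\delta m=m\delta+c$ with $c=-\d+\d(\si\sp)=-\d+\si\,\d\sp$ (using $\d\si=\si\d$), equivalently $\delta(1+m)=(1+m)\delta+c$; conjugating by $N$ and using $N(1+m)=(1+m)N=1$ gives the commutator formula $N\delta-\delta N=NcN=-N\d N+\si\,\d\sp'$, where I again use $N\si=\si$ and $\sp N=\sp'$.

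Feeding this in, the bracket becomes $N\d+N\delta-\delta N=N\d(1-N)+\si\,\d\sp'$; since $1-N=\d\h N$ and $\d^2=0$, the first summand vanishes, leaving $\si\,\d\sp'$, and then $\h\,\si\,\d\sp'=0$ because $\h\si=0$. This finishes the verification. I would close by remarking that the argument uses only the double-complex relations, the augmentation relations $\delta\si=0$ and $\d\si=\si\d$, the $\delta$-homotopy relation, the vanishing $\h\si=0$ forced by non-negativity of the bidegrees, and the formal invertibility of $1+\d\h$; in particular it never requires $\sp\si=1$ or any idempotency of $\si\sp$, so the only conceptual ingredient beyond bookkeeping is the conjugation trick for $N\delta-\delta N$.
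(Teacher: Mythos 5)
Your proof is correct, and it takes a genuinely different route from the paper's. The paper never manipulates the inverse directly: writing $\h'(1+\d\h)=\h=(1+\h\d)\h'$, it multiplies the claimed identity on the left by $(1+\h\d)$ and on the right by $(1+\d\h)$, so that both sides become \emph{polynomial} expressions in $\h,\d,\delta,\si,\sp$; one then checks that $(1+\h\d)[\h',\d+\delta](1+\d\h)=[\h,\d+\delta]=[\h,\d]+1-\si\circ\sp$ (using $\d\delta=-\delta\d$ and $\d^2=0$) and that $(1+\h\d)(1-\si\circ\sp')(1+\d\h)$ expands to the same thing (using $\h\d\si=\h\si\d=0$), and concludes by invertibility. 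You instead expand $[\h N,\d+\delta]$ head-on, with $N=(1+\d\h)^{-1}$, and must therefore confront the fact that $\delta$ does not commute with the Neumann series $N$; your conjugation of the first-order relation $\delta(1+\d\h)=(1+\d\h)\delta+c$, $c=-\d+\si\,\d\,\sp$, to obtain $N\delta-\delta N=NcN=-N\d N+\si\,\d\,\sp'$ is exactly the right fix, and the remainder follows from $\d^2=0$ and $\h\si=0$ as you say. The paper's clearing-of-denominators is shorter and sidesteps any commutator formula for $N$; your route is more self-contained in that it actually verifies the invertibility of $1+\d\h$ (local nilpotency of $\d\h$ from the bidegree count $(-1,+1)$ on a first-quadrant complex), a point the statement of the lemma and the paper's proof take for granted, and it produces the explicit defect formula for $[N,\delta]$ as a by-product. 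Both arguments consume exactly the same hypotheses --- $[\h,\delta]=1-\si\circ\sp$, $\h\si=0$, $\d\si=\si\d$, $\delta\si=0$, and the double-complex relations --- and, as you correctly observe, neither needs $\sp\circ\si=\id$, which the paper only imposes after the lemma.
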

Here, $[\cdot,\cdot]$ denotes the \emph{graded} commutator, e.g. $[\h,\delta]=\h\delta+\delta\h$.  
\begin{proof}
Using $\h'(1+\d \h)=\h=(1+\h\d)\h'$ one finds, by straightforward calculation, 
\[ (1+\mathsf{h}\d) [\mathsf{h}',\d+\delta]\,(1+\d \mathsf{h})
=[\h,\d+\delta]=[\h,\d]+1-\si\circ \sp.\]
Expanding $(1+\mathsf{h}\d) (1-\si  \circ\sp')
 (1+\d \mathsf{h})$, using $\h\d\si=\h\si\d=0$, gives the same result. 
\end{proof}	
	
We shall assume from now on that $\sp\circ \si=\id_{\sX}$, so that $\si$ is injective and $\si\circ \sp$ is a projection onto the image of $\si$. Then also $\sp'\circ \si=\id_{\sX}$, and $\si\circ \sp'$ is again a projection. 
%, and is a cochain map for the total differential$\d+\delta$. 
In other words, $\si\colon \sX^\bullet\to \on{Tot}^\bullet(\sD)$ is a homotopy equivalence, with $\sp'$ a homotopy inverse. 

In our applications, there is another cochain complex $(\sY^\bullet,\delta)$, with a
\emph{vertical augmentation map} 
\[ \sj\colon \sY\to \sD\]
 (thus $\sY^\bullet$ is regarded as a double complex concentrated in bidegrees $(\bullet,0)$). The horizontal homotopy $\h$ allows us to `invert' the second cochain map in 
\[ \sY^\bullet\stackrel{\sj}{\lra} \on{Tot}^\bullet(\sD)\stackrel{\si}{\longleftarrow}\sX^\bullet,\]
thereby producing a cochain map 
$\sp'\circ \sj=\sp\circ (1+\d \h)^{-1}\circ \sj\colon \sY^\bullet\to \sX^\bullet$. 
On elements of degree $p$, this is given by a `zig-zag' 
\begin{equation}\label{eq:xy} (-1)^p \sp \circ (\d \h)^p \circ \sj\colon \sY^p\to \sX^p,\end{equation}
illustrated here for $p=2$:
\[ \xymatrix{ %& & & & \\    
		\sX^2& \sD^{0,2}  \ar[l]_{\sp} &
	&  &            \\  & \sD^{0,1}\ar[u]^{\mathsf{d}} & \sD^{1,1}\ar[l]^{\mathsf{h}}&  &            \\ 
	 &  & \sD^{1,0}\ar[u]^{\mathsf{d}}& 
\sD^{2,0}\ar[l]^{\mathsf{h}}& 
	 \\ & & 
	 & 
	\sY^2
	\ar[u]^{\sj}&
}\]

\begin{example}\label{ex:chechderham}
Let $M$ be a manifold with a covering $\ca{U}=\{U_i\}$ by open sets, and let $\sD^{p,q}=\sC^p(\ca{U},\Omega^q)$ be the \v{C}ech-de Rham double complex. It comes 
with a horizontal augmentation map $\si\colon \Omega^q(M)\to \sD^{0,q}$ from the de Rham complex, 
and a vertical augmentation map 
$\sj\colon\sC^p(\ca{U},\underline{\R}) \to \sD^{p,0}$ from the \v{C}ech complex. Given a locally finite partition of unity $\{\chi_i\}$ subordinate to the cover, 
one obtains a horizontal homotopy operator $\h$, with $\sp$ the map taking a collection of $q$-forms 
$\omega_i\in \Omega^q(U_i)$ on the open sets to a global $q$-form $\sum_i \chi_i \omega_i\in \Omega^q(M)$. See Bott-Tu \cite[Proposition 8.5]{bo:di}. The resulting 
zig-zag  \eqref{eq:xy} defines a \v{C}ech-de Rham cochain map $\sC^p(\ca{U},\underline{\R})\to 
\Omega^p(M)$, which is nothing but the `collating formula' of Bott-Tu, \cite[Proposition 9.5]{bo:di}. 
\end{example}

Consider now the situation that the vertical differential has a homotopy operator 
\[ \k\colon \sD^{p,q}\to \sD^{p,q-1},\ \ [\d,\k]=1-\sj\circ \sq,\]
where 
$\sq\colon \sD^{0,\bullet}\to \sY^\bullet$ is a cochain map for $\d$ with $\sq\circ \sj=\id_{\sY}$. Then we can apply the Perturbation Lemma \ref{lem:perturbed} to 
this vertical homotopy, and we obtain a cochain map $\sq\circ (1+\delta\k)^{-1}\circ \si\colon \sX^\bullet\to \sY^\bullet$ given on degree $p$ elements by a zig-zag, 
\begin{equation}\label{eq:yx}  (-1)^p \sq \circ (\delta\k )^p \circ \si\colon \sX^p\to \sY^p.
\end{equation}
Note that the route taken by the zig-zag \eqref{eq:yx} retraces the steps of the zig-zag \eqref{eq:xy}.
The following result will be used to relate van Est `integration' and `differentiation'  maps. 
\begin{lemma}[Zig-zag back-and-forth]\label{lem:backandforth}
Suppose the homotopy operators $\h,\k$ %and cochain maps $\si,\sj,\sp,\sq$ 
satisfy 
\begin{equation}\label{eq:hk0} \h\circ \k=0,\ \ \ \ \sp\circ \k=0.\end{equation}
%as well as \[ \h\circ (1-\sj\circ \sq)\big|_{\sD^{p,0}}=0,\ \ \ \  \sp\circ (1-\sj\circ \sq)|_{\sD^{p,0}}=0.\]
Then \eqref{eq:yx} followed by 
\eqref{eq:xy} is the identity map of $\sX^p$.
\end{lemma}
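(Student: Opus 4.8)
The plan is to expand the stated composite and collapse it to the identity using only the two homotopy relations, the normalization $\sp\circ\si=\id_{\sX}$, and the extra hypotheses \eqref{eq:hk0}. By \eqref{eq:yx} and \eqref{eq:xy}, the composite on $\sX^p$ equals
\[ \big((-1)^p\,\sp(\d\h)^p\sj\big)\circ\big((-1)^p\,\sq(\delta\k)^p\si\big)=\sp\,(\d\h)^p\,\sj\sq\,(\delta\k)^p\,\si, \]
the two signs cancelling. First I would dispose of the central factor $\sj\sq$. Since $(\delta\k)^p\si$ takes values in the bottom row $\sD^{p,0}$, where $\k$ vanishes (it would lower the vertical degree below zero), the relation $[\d,\k]=1-\sj\sq$ reads $\sj\sq=1-\k\d$ on that row. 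Substituting this and noting that for $p\ge 1$ the rightmost factor of $(\d\h)^p$ is $\h$, so that $(\d\h)^p\k=0$ by $\h\k=0$, the $\k\d$-term drops and the composite reduces to $\sp\,(\d\h)^p(\delta\k)^p\,\si$. (For $p=0$ there is no staircase and one gets $\sp\,\sj\sq\,\si=\sp(1-\k\d)\si=\id$ at once, using $\sp\k=0$ and $\sp\si=\id$.)

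The core of the argument is two absorption identities. Writing $[\h,\delta]=1-\si\sp$ as $\h\delta=1-\si\sp-\delta\h$ and postcomposing with $\k$, the hypotheses $\sp\k=0$ and $\h\k=0$ give, on every component,
\[ \h\,\delta\,\k=\k. \]
Off the bottom row, where $\sj\sq=0$ and hence $\d\k=1-\k\d$, the same relation $\h\k=0$ yields
\[ \h\,\d\,\k=\h. \]
These are precisely the moves that collapse the staircase.

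Finally I would run the telescoping on $(\d\h)^p(\delta\k)^p$ acting on $\sD^{0,p}$. Applying $\h\delta\k=\k$ at the junction turns it into $(\d\h)^{p-1}\,\d\k\,(\delta\k)^{p-1}$; for $p\ge 2$ the adjacent block $\h\d\k$ then collapses to $\h$, giving back $(\d\h)^{p-1}(\delta\k)^{p-1}$. Iterating lowers the exponent by one at each stage until one reaches $\d\h\delta\k=\d\k$. Since $\sD^{0,p}$ lies off the bottom row, $\d\k=1-\k\d$ there, so the whole composite is $\sp\,\d\k\,\si=\sp\si-\sp\,\k\,\d\,\si=\id$, using $\sp\si=\id$ and $\sp\k=0$. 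The one delicate point, and the main obstacle, is the bidegree bookkeeping: each use of $\h\d\k=\h$ (and of $\d\k=1-\k\d$ at the end) is valid only off the bottom row, so I must verify that every $\d\k$ produced in the telescoping acts on a component with positive vertical degree. Tracking the staircase shows that the $\d\k$ arising when the exponent drops from $m$ to $m-1$ acts on $\sD^{m-1,\,p-m+1}$, whose vertical degree $p-m+1$ is $\ge 1$ for all $2\le m\le p$, so every reduction is legitimate.
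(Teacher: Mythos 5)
Your proof is correct and follows essentially the same route as the paper's: both dispose of the central $\sj\circ\sq$ factor via the bottom-row identity $[\d,\k]=1-\sj\circ\sq$ together with $\h\circ\k=0$, and then telescope $(\d\h)^p(\delta\k)^p$ one step at a time using $\h\delta\k=\k$ (equivalently the paper's $\d\h\delta\k=\d\k=1-\k\d$) and killing the $\k\d$ terms with $\h\circ\k=0$ and $\sp\circ\k=0$. Your explicit bidegree bookkeeping at the end is a welcome clarification of a point the paper treats somewhat casually, but it is the same argument.
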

\begin{proof}
We first note that 	
\begin{equation}\label{eq:4} \sp\circ  \sj \circ \sq\big|_{\sD^{0,0}}=\sp\big|_{\sD^{0,0}},\end{equation}
and for $p>0$, 	
\begin{equation} \label{eq:3}\h\circ  \sj \circ \sq\big|_{\sD^{p,0}}=\h\big|_{\sD^{p,0}}.\end{equation} 
Equation \eqref{eq:3} follows from the calculation, for $p>0$, 	
\[ \h \circ (1-\sj\circ \sq) \big|_{\sD^{p,0}}=\h\circ [\d,\k]|_{\sD^{p,0}}=\h\circ \d\circ \k|_{\sD^{p,0}}=0\]
where we used that $\k$ vanishes on $\sD^{p,0}$ for degree reasons. Equation \eqref{eq:4} is obtained similarly. 
The Lemma now follows for $p=0$ from 
\[ \sp\circ \sj\circ \sq\circ \si|_{\sX^0}=\sp\circ \si|_{\sX^0}=\id_{\sX^0},\]
using \eqref{eq:4}, and for $p>0$ from the following calculation, as operators on $\sX^p$:
\begin{align*}
\sp\circ (\d \h)^p \circ \sj \circ \sq \circ (\delta \k)^p \circ \si&=
\sp\circ (\d \mathsf{h})^p \circ (\delta \k)^p \circ \si\\
 &=\sp\circ (\d \mathsf{h})^{p-1} \circ 
(1-\k\d )
 \circ (\delta \k)^{p-1} \circ \si\\&=\sp\circ(\d \mathsf{h})^{(p-1)}(\delta \k)^{p-1}\circ \si\\
 &=\ldots
 \\ &=\sp\circ \si\\
 &=\id_{\sX^p}.
\end{align*}
In the first equality, we used \eqref{eq:3} if $p>0$, or \eqref{eq:4} if $p=0$, 
%that $\h\circ  \sj \circ \sq=\h$ on 
%$\sD^{p,0}$ for $p>0$ (or $\sp\circ  \sj \circ \sq=\sp$ on $\sD^{0,0}$ for $p=0$)
to omit the $\sj\circ \sq$ factor. The second equality follows from 
\[ \d\mathsf{h}\delta\k\big|_{\sD^{p,0}}=\d(1-\si\sp-\delta \mathsf{h})\k\big|_{\sD^{p,0}}=\d\k\big|_{\sD^{p,0}}=\id_{\sD^{p,0}}-\k\d\big|_{\sD^{p,0}}.\] 
Here we used $\mathsf{h} \k=0$ and $\sp \k=0$. 
\end{proof}

\begin{remark}
In the \v{C}ech-de Rham example \ref{ex:chechderham}, suppose that the cover $\ca{U}$ is a \emph{good} cover, so that all non-empty intersections of the $U_i$ are contractible. Then  the choice of such retractions defines a vertical homotopy operator $\k$, and hence gives a cochain map $\Omega^\bullet(M)\to \sC^\bullet(\ca{U},\underline{\R})$ in the opposite direction. Unfortunately, the conditions of Lemma 
\ref{lem:backandforth} are \emph{not} satisfied, in general; hence this map won't give a right inverse to the \v{C}ech-de Rham cochain map, even though it is a homotopy inverse. 
\end{remark}

\section{Van Est theory for Lie groups}\label{sec:classical}
Suppose that $G$ is a Lie group with finitely many connected components. One of van Est's results, often referred to as the \emph{van Est theorem}, is that the smooth cohomology of $G$ with coefficients in a representation $V$ 
is canonically isomorphic to the relative Lie algebra cohomology with respect to a maximal compact subgroup $K\subset G$, with coefficients in $V$.  As we will see, the Perturbation Lemma will guide us towards explicit van Est maps, in both directions. Furthermore, we will show that the `integration map' is a right inverse to the `differentiation map', at the level of cochains. \medskip

\subsection{The van Est double complex}
Let $G$ be a Lie group, with a representation on a vector space $V$. The smooth Lie group cochain complex $(\sC^\bullet(G,V),\delta)$ has as its $p$-cochains the functions
\[ \sC^p(G,V) =C^\infty(G^p,V),\]
and the differential is given by 
\begin{align} \label{eq:deltaformula}
(\delta f)(g_1,\ldots,g_{p+1})&=f(g_2,\ldots,g_{p+1})+
\sum_{i=1}^{p}(-1)^i 
f(g_1,\ldots,g_ig_{i+1},\ldots,g_{p+1})\\
&\ \ +(-1)^{p+1} (g_{p+1})^{-1}\cdot f(g_1,\ldots,g_p).\nonumber \end{align}
%(There is also the normalized subcomplex $\wt{\sC}^\bullet(G,V)$, given by cochains with the extra property that $f(g_1,\ldots,g_p)=0$ whenever $g_i=e$ for some $i$.)
On the other hand, letting $\g=\on{Lie}(G)$, we have the usual 
Lie algebra complex $(\sC^\bullet(\g,V),\d_{CE})$, where  
\[ \sC^q(\g,V)=V\otimes \wedge^q \g^*,\]
and where $\d_{CE}$ is the Chevalley-Eilenberg differential.  The
Lie group complex and Lie algebra complex are related by a double complex $(\sD^{\bullet,\bullet}(G,V),\delta,\d)$ introduced in van Est's original articles \cite{vanest:gro,vanest:alg,vanest:appl}; see also Guichardet \cite{gui:coh}.
The van Est double complex has bigraded components 
\begin{equation}\label{eq:dpq} \sD^{p,q}(G,V)=C^\infty(G^p\times G, V\otimes \wedge^q\g^*),
\end{equation}
and the horizontal differential  is given by 
\begin{align*}{(\delta\psi)(g_1,\ldots,g_{p+1};g)}&=\psi(g_2,\ldots,g_{p+1};g)+
\sum_{i=1}^p (-1)^i \psi(g_1,\ldots,g_ig_{i+1},\ldots,g_{p+1};g)
\\&\ \ +(-1)^{p+1}\psi(g_1,\ldots,g_p;g_{p+1}g).\end{align*}
The vertical differential is given by $\d=(-1)^p\d_{CE}$, where we identify $\sD^{p,\bullet}(G,V)$ with the Chevalley-Eilenberg complex of the $\g$-representation on $C^\infty(G^p\times G)\otimes V$, using the infinitesimal $\g$-representation on $V$ and the representation $\xi\mapsto (0,\xi^L)$ (the left-invariant vector field on the last $G$-factor) on $C^\infty(G^p\times G)$. Then $[\d,\delta]=\d\delta+\delta\d=0$, as desired.  
This double complex has horizontal and vertical augmentation maps
\[ \si\colon \sC^{\bullet}(\g,V)\to \sD^{0,\bullet}(G,V),\ \ \ 
\sj\colon \sC^\bullet(G,V)\to  \sD^{\bullet,0}(G,V),\]
where $\si$ is the inclusion of constant functions, while
\[ (\sj f)(g_1,\ldots,g_p;g)=
g^{-1}\cdot f(g_1,\ldots,g_p)\]
for $f\in \sC^p(G,V)=C^\infty(G^p,V)$. Note $\sj$ is an inclusion of $\sC(G,V)$ as 
the $G$-invariant part of $\sD^{\bullet,0}(G,V)$, or equivalently the $G$-basic part of the full double complex, 
with respect to the following $G$-action 
\begin{equation}\label{eq:principalaction}
(a\cdot\psi)(g_1,\ldots,g_p;g)=a\cdot \psi(g_1,\ldots,g_p;ga)
\end{equation}
(using the coadjoint action on $\wedge\g^*$).

Now suppose $K\subset G$ is a compact Lie subgroup. The \emph{relative Lie algebra complex with coefficients in $V$} is 
the $K$-basic subcomplex 
\[ \sC(\g,V)_{K-\on{basic}}.\]
That is, it consists of elements that are annilhilated by contractions with elements of $\mf{k}$ on the $\wedge\g^*$-factor, and are invariant for the action of $K\subset G$. Similarly, 
we can consider the 
$K$-basic sub-double complex $\sD(G,V)_{K-\on{basic}}$ with respect to the action \eqref{eq:principalaction}. The two augmentation maps for $\sD(G,V)$ restrict to augmentation maps 
\[ \si\colon \sC^{\bullet}(\g,V)_{K-\on{basic}}\to \sD^{0,\bullet}(G,V)_{K-\on{basic}},\ \ \ 
\sj\colon \sC^\bullet(G,V)\to \sD^{\bullet,0}(G,V)_{K-\on{basic}}.\]

\subsection{Differentiation}
The double complex $\sD(G,V)_{K-\on{basic}}$ has a horizontal homotopy operator:
\begin{equation}\label{eq:hmap} (\h\psi)(g_1,\ldots,g_{p-1};g)=(-1)^p \int_K\ \psi(g_1,\ldots,g_{p-1},gk^{-1};k)\d k;\end{equation}
here $\d k$ is the normalized invariant Haar measure on $K$. Indeed, a direct calculation shows that 
$\delta\h+\h\delta=1-\si\circ \sp$ where $\sp$ vanishes on elements of bidegree $(p,q)$ with $p>0$, while
\begin{equation}\label{eq:pmap}
 \sp(\psi)=\int_K \psi(k)\d k
%\in (V\otimes \wedge\g^*)_{K-\on{basic}}
,\end{equation} 
for $\psi\in \sD^{0,q}(G,V)_{K-\on{basic}}=C^\infty(G,V\otimes \wedge^q\g^*)_{K-\on{basic}}$.
\begin{remark}
For $K=\{e\}$, the homotopy operator simplifies to \[ (\h\psi)(g_1,\ldots,g_{p-1};g)=(-1)^p  \psi(g_1,\ldots,g_{p-1},g;e).\] Note that this homotopy operator on $\sD(G,V)$ does \emph{not} preserve the basic subcomplex with respect to a nontrivial compact subgroup. 
%\footnote{On the other hand, note that the $p+1$ commuting $G$-actions on $\sC^p(G,V)$ extend to commuting actions on $\sD^{p,q}(G,V)$. Letting $\on{Av}^{(i)}$ be the averaging operation for the $i$-th action, the two homotopy operators, denoted $\h_{G/K}$ and $\h_G$ for emphasis, are related by $\h_{G/K}=\h_G\circ \on{Av}^{(p)}$.} 
\end{remark}

Using the Perturbation Lemma \ref{lem:perturbed}, we obtain a cochain map (van Est differentiation)
\[ \VE_{G/K}=\sp\circ (1+\d\h)^{-1}\circ \sj\colon 
\sC^\bullet(G,V)\to \sC^\bullet(\g,V)_{K-\on{basic}}.\]
For an explicit description of this map, 
consider the following $p+1$ commuting
$G$-actions on the space $C^\infty(G^p,V)$, 
\begin{equation}\label{eq:agg}
(a\cdot f)(g_1,\ldots,g_p)=\begin{cases}
f(a^{-1}g_1,\ldots, g_p)& i=0,\\
f(g_1,\ldots,g_i\,a,a^{-1}g_{i+1},\ldots, g_p)
& 0< i <p,
\\ a\cdot f(g_1,\ldots,g_pa)& i=p.
\end{cases}\end{equation}
%The $i$-th action defines a Chevalley-Eilenberg differential $\d^{(i)}$ on 
%$C^\infty(G^p,V)\otimes \wedge^\bullet\g^*$. 
For each of these actions we can consider its restriction to $K$; 
denote by 
\[ \on{Av}^{(i)}\colon C^\infty(G^p,V)\to C^\infty(G^p,V)\]
the averaging operation with respect to $i$-th $K$-action. The operators $\on{Av}^{(i)}$ commute since the actions commute,  and we denote by  $\on{Av}=\on{Av}^{(0)}\circ \cdots \circ \on{Av}^{(p)}$ the total $K^{p+1}$-averaging operation. We will also need the $G$-actions, obtained from the actions \eqref{eq:agg} labeled $i,i+1,\ldots,p$ by passing to the diagonal action:
\begin{equation}\label{eq:tildeaction}
(a\cdot f)(g_1,\ldots,g_p)=a\cdot f(g_1,\ldots,g_i a,a^{-1}g_{i+1}a,\ldots,a^{-1}g_p a).    \end{equation} 
Denote by ${\d}^{(i)}$ the corresponding Chevalley-Eilenberg differential on $C^\infty(G^p,V)\otimes \wedge \g^*$. \medskip

\begin{theorem}\label{th:K}
	For any compact Lie subgroup $K\subset G$, the van Est  differentiation $\VE_{G/K}=\sp\circ (1+\d\h)^{-1}\circ \sj$ 
	is given by the formula 
	\begin{equation}\label{eq:vek}  \VE_{G/K}(f)=
	\big(\d^{(1)}\cdots \d^{(p)}\ \on{Av}(f)\big)\Big|_{(e,\ldots,e)},\end{equation}
	for $f\in C^\infty(G^p,V)$.
	In particular, $\VE_{G/K}=\VE_G\circ \on{Av}$. 
\end{theorem}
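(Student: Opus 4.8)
The plan is to evaluate the defining zig-zag of $\VE_{G/K}$ explicitly and to match it with \eqref{eq:vek}. First I would record that the Neumann series for $(1+\d\h)^{-1}$ collapses: since $\sj$ lands in horizontal degree $p$, each factor $\d\h$ has bidegree $(-1,1)$, and $\sp$ is supported in horizontal degree $0$, the only surviving term of $\sp\circ(1+\d\h)^{-1}\circ\sj$ on $\sC^p(G,V)$ is the one with exactly $p$ factors. Thus $\VE_{G/K}(f)=(-1)^p\,\sp\circ(\d\h)^p\circ\sj(f)$, the staircase \eqref{eq:xy}, and it suffices to unwind this staircase.

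Second, I would compute the effect of one rung. A direct calculation from \eqref{eq:hmap} and the definition of $\sj$ gives $(\h\,\sj f)(g_1,\dots,g_{p-1};g)=(-1)^p(\on{Av}^{(p)}f)(g_1,\dots,g_{p-1},g)$: the homotopy simultaneously exposes the last argument of $f$ as the vertical variable $g$ and averages it under the $p$-th $K$-action of \eqref{eq:agg}. The subsequent $\d$ then differentiates this vertical variable, i.e.\ applies the Chevalley-Eilenberg operator for the $G$-action on the exposed slot. Iterating, each rung of the staircase exposes one further argument, averages it, and differentiates it, while the concluding $\sp$ supplies the averaging over the $0$-th $K$-action. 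I would organize this as an induction on $p$, keeping track at each stage of the partially averaged, partially differentiated cochain living in $\sD^{p-k,k}(G,V)_{K\text{-basic}}$.

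The heart of the matter is to see that these interleaved operations assemble into the closed form \eqref{eq:vek}. Two things must be reconciled. First, the homotopy \eqref{eq:hmap} carries not a bare translation but the right-translation-with-coadjoint $K$-action \eqref{eq:principalaction}; as the staircase relabels the vertical variable into the successive slots $p,p-1,\dots,1$, these averagings are transported onto the operators $\on{Av}^{(p)},\dots,\on{Av}^{(1)}$ and, together with the $\on{Av}^{(0)}$ coming from $\sp$, assemble (using that the $\on{Av}^{(i)}$ commute) into the total averaging $\on{Av}=\on{Av}^{(0)}\cdots\on{Av}^{(p)}$ applied to $f$. Second, the coadjoint part of the transported averaging is exactly what turns the naive single-slot derivatives into the derivatives $\d^{(1)},\dots,\d^{(p)}$ for the diagonal actions \eqref{eq:tildeaction}. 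That this is consistent with restricting to $(e,\dots,e)$ only at the very end --- rather than setting each slot to $e$ as the staircase does, rung by rung --- rests on the elementary observation that the adjoint correction fields $\xi^L_{g_j}-\xi^R_{g_j}$ appearing in \eqref{eq:tildeaction} vanish at $g_j=e$; hence applying them and then evaluating at $g_j=e$ produces zero, so the diagonal derivatives and the single-slot derivatives agree after restriction, and the order of differentiation versus evaluation is immaterial.

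I expect the main obstacle to be precisely this bookkeeping: propagating the transported averagings and coadjoint twists through the interleaved homotopies and differentials with correct signs, and verifying the induction that turns the rung-by-rung staircase into the symmetric expression $\big(\d^{(1)}\cdots\d^{(p)}\,\on{Av}(f)\big)|_{(e,\dots,e)}$. Once \eqref{eq:vek} is established, the factorization is immediate: specializing to $K=\{e\}$, where $\on{Av}=\id$, gives $\VE_G(h)=\big(\d^{(1)}\cdots\d^{(p)}h\big)|_{(e,\dots,e)}$ for every $h\in C^\infty(G^p,V)$; applying this with $h=\on{Av}(f)$ yields $\VE_{G/K}(f)=\VE_G(\on{Av}(f))$, as claimed.
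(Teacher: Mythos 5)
Your overall route is the same as the paper's: collapse the Neumann series to the zig-zag $(-1)^p\,\sp\circ(\d\h)^p\circ\sj$, compute rung by rung that $\h$ exposes one slot and $K$-averages it (so $(-1)^p\,\h\sj f=\on{Av}^{(p)}f$ with the vertical variable in the last slot), recognize $\sp$ as supplying $\on{Av}^{(0)}$ together with evaluation, and obtain $\VE_{G/K}=\VE_G\circ\on{Av}$ by specializing \eqref{eq:vek} to $K=\{e\}$. But your central assembly step has a genuine gap. The zig-zag produces the \emph{interleaved} expression $\big(\on{Av}^{(0)}\d^{(1)}\on{Av}^{(1)}\d^{(2)}\cdots\d^{(p)}\on{Av}^{(p)}f\big)\big|_{(e,\ldots,e)}$, and to reach \eqref{eq:vek} each $\on{Av}^{(i)}$ must be moved to the right past the differentials $\d^{(j)}$ with $j>i$. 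The fact that accomplishes this --- and it is the key lemma of the paper's proof --- is that the $i$-th action \eqref{eq:agg} commutes with the $j$-th \emph{diagonal} action \eqref{eq:tildeaction} whenever $i<j$, hence $\on{Av}^{(i)}\circ\d^{(j)}=\d^{(j)}\circ\on{Av}^{(i)}$ for $i<j$. The fact you invoke instead --- that the $\on{Av}^{(i)}$ commute \emph{with one another} --- cannot disentangle averagings from differentials; note also that the commutation genuinely fails for $i\ge j$ (right translations do not commute with right translations or conjugations on the same slot), so the one-sided relation $i<j$ is precisely what has to be identified and proved.

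The mechanism by which the diagonal differentials appear is also misidentified. They do not come from ``the coadjoint part of the transported averaging''; they come from the vertical differential itself. After $k$ rungs the intermediate cochain is a function on $G^p$ evaluated with the vertical variable in slot $p-k+1$ and all trailing slots equal to $e$, and under this substitution the principal action \eqref{eq:principalaction}, which defines $\d_{CE}$, becomes exactly the diagonal action \eqref{eq:tildeaction}; the computation is consistent because the diagonal action \emph{preserves} the trailing-$e$ locus (conjugation fixes $e$), so $\d^{(j)}$ commutes with restriction to it. Your substitute argument --- the correction fields $\xi^L-\xi^R$ vanish at $e$, ``hence applying them and then evaluating produces zero'', so differentiation and evaluation may be interchanged --- is not valid as stated for iterated derivatives: a vector field vanishing at a point contributes nothing only if no further derivatives intervene (for instance $X=x\partial_x$ vanishes at $0$, yet $\partial_x Xf|_0=f'(0)\neq 0$); to repair it you need precisely the tangency/invariance statement above. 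Both gaps are fixable, but they are where the proof actually lives.
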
\medskip
\begin{proof}
	The $G$-actions on $C^\infty(G^p,V)$, given by Equation \eqref{eq:agg}, extend to  commuting actions on 
	$C^\infty(G^p,V\otimes \wedge^q \g^*)$, given by the same formulas (replacing $V$ with $V\otimes \wedge^q\g^*$). 	Denote by $\on{Av}^{(i)}$ the averaging operation 
	on $C^\infty(G^p,V\otimes \wedge^q \g^*)$
	for the $i$-th action of $K\subset G$.	
	
	%, and by $\wt{\on{Av}}^{(i)}$ the averaging operation with respect to $K\subset G$. 
	We calculate
	$\on{VE}_{G/K}=(-1)^p \sp\circ (\d\h)^{p}\circ \sj$ on  
	$f\in \sC^p(G,V)$. The first few steps are 
	\begin{align*}
	(-1)^p (\sj f)(g_1,\ldots,g_p;g)&=(-1)^p g^{-1}\cdot f(g_1,\ldots,g_p)\\
	(-1)^p (\h \sj f)(g_1,\ldots,g_{p-1};g)&=\int_K k^{-1}\cdot f(g_1,\ldots,g_{p-1},g k^{-1})\ \d k\\
	&= (\on{Av}^{(p)} f)(g_1,\ldots,g_{p-1},g)\\
	(-1)^p (\d\h\sj f)(g_1,\ldots,g_{p-1};g)&=(-1)^{p-1} (\d^{(p)} \on{Av}^{(p)} f)(g_1,\ldots,g_{p-1},g).
	\end{align*}
	In the last line, we used that the $G$-action \eqref{eq:principalaction} defining $\d_{CE}\psi$ for 
	\[ \psi(g_1,\ldots,g_{p-1};g)=(\on{Av}^{(p)} f)(g_1,\ldots,g_{p-1},g)\]
	corresponds to the $p$-th $G$-action \eqref{eq:agg} (after setting $g_p=g$), defining $\d^{(p)}$. Next, 	
	\begin{align*}
	(-1)^p (\h\d\h\sj f)( g_1,\ldots,g_{p-2};g)&=
	\int_K  (\d^{(p)} \on{Av}^{(p)} f)(g_1,\ldots,g_{p-2},gk^{-1},k)\ \d k\\
	&=(\on{Av}^{(p-1)}\d^{(p)} \on{Av}^{(p)} f)(g_1,\ldots,g_{p-2},g,e)\\
	(-1)^p (\d\h\d\h\sj f)( g_1,\ldots,g_{p-2};g)&=({\d}^{(p-1)}\on{Av}^{(p-1)}\d^{(p)} \on{Av}^{(p)} f)(g_1,\ldots,g_{p-2},g,e).
\end{align*}
Here we used that the $G$-action \eqref{eq:principalaction} defining $\d_{CE}\psi$ for 
\[ \psi(g_1,\ldots,g_{p-2};g)=(\on{Av}^{(p-1)}\d^{(p)} \on{Av}^{(p)} f)(g_1,\ldots,g_{p-2},g,e)\]
corresponds (for $g_{p-1}=g,\ g_p=e$) 
to the $(p-1)$-st diagonal $G$-action 
\eqref{eq:tildeaction}, defining ${\d}^{(p-1)}$. Continuing in this fashion, we arrive at 
	\[ (-1)^p\, \sp\circ (\d \h)^p\circ \sj\ f=(\on{Av}^{(0)}{\d}^{(1)}\on{Av}^{(1)}\cdots {\d}^{(p)}\on{Av}^{(p)} f)(e,\cdots,e).\]
	Since the $i$-th-action \eqref{eq:agg} commutes with the $j$-th action 
	\eqref{eq:tildeaction} for $i<j$, the operator 	
	$\on{Av}^{(i)}$ commutes with ${\d}^{(j)}$ for $i<j$. Hence  we may move the averaging operations all the way to the right, resulting in the formula \eqref{eq:vek}. 
\end{proof}

\begin{remark}
While there are various more or less explicit descriptions of the van Est differentiation  (see in particular \cite{gui:coh}), we are not 
aware of an appearance of the formula \eqref{eq:vek} in the literature, for general compact $K$. Note also that it is not necessary to pass to a `normalized subcomplex'.
\end{remark}

\subsection{Integration}
For the discussion of van Est integration maps, another interpretation of the relative (double) complex will be convenient. Let $G$ act on itself by left translation $g\mapsto ag$, 
and let $\Omega^q(G,V)^G$ be the corresponding complex of $G$-invariant $V$-valued forms, where the action of $V$ is the given representation.  Restriction of such a form to the group 
unit gives an isomorphism $\Omega^q(G,V)^G\to V\otimes \wedge^q \g^*$, which intertwines the 
de Rham differential and the Chevalley-Eilenberg differential. Thus 
\[ \sC^q(\g,V)\cong \Omega^q(G,V)^G\]
as differential complexes. The $G$-action on $\sC^q(\g,V)=V\otimes \wedge^q \g^*$ (with the given $G$-representation on $V$ and the coadjoint action on $\wedge\g^*$)
corresponds to the action 
on $ \Omega^q(G,V)^G$ coming from the action $g\mapsto ga^{-1}$ on $G$ and the trivial action  on $V$. Consider the restriction of this action to $K$; 
Since 
$\Omega^q(G,V)_{K-\on{basic}}=
\Omega^q(G/K,V)$, and similarly for the invariant forms for the action by left multiplication, 
we obtain the identification
\begin{equation}\label{eq:invforms} \sC^q(\g,V)_{K-\on{basic}}\cong 
\Omega^q(G/K,V)^G.\end{equation}
Similarly, elements  $\psi\in \sD^{p,q}(G,V)_{K-\on{basic}}$ may be identified with functions  
\begin{equation}\label{eq:betamap} \beta\colon G^p\to \Omega^q(G/K,V),\end{equation}
 with smooth dependence on $(g_1,\ldots,g_p)\in G^p$ as parameters. In these terms, the two
differentials are 
\begin{equation}\label{eq:dbeta}
 (\d\beta)(g_1,\ldots,g_p)=(-1)^p \d_{Rh} \beta(g_1,\ldots,g_p),
 \end{equation}
where $\d_{Rh}$ is the de Rham differential, and 
\begin{align} \label{eq:deltabeta}(\delta\beta)(g_1,\ldots,g_{p+1})&=
\beta(g_2,\ldots,g_{p+1})+\sum_{i=1}^p (-1)^i \beta(g_1,\ldots,g_ig_{i+1},\ldots,g_{p+1})
\\&\ \ +(-1)^{p+1} L(g_{p+1})^*\beta(g_1,\ldots,g_p)\nonumber\end{align}
where $L(a)\colon G/K\to G/K$ is 
the action of $a\in G$. (For $p=0$, this is to be interpreted as 
$(\delta\beta)(g_1)=\beta-L(g_1)^*\beta$.) The horizontal augmentation map $\si$ is  simply the  inclusion of the invariant forms \eqref{eq:invforms}, while $\sj$ is the pullback of $V$-valued functions 
on $G^p$ under the map $G/K\to \pt$.

Suppose now that $G$ has finitely many components and that $K$ is a \emph{maximal} compact subgroup of $G$. Recall that maximal compact subgroups are unique up to conjugation,   and that the homogeneous space $G/K$ is
%$K$-equivariantly 
diffeomorphic to the vector space $\g/\mf{k}$ (see, e.g., Borel \cite[Chapter VII]{bor:sem}).
%\footnote{\tt If I understand correctly, this is true $K$-equivariantly. Does one get more from the $K$-equivariance??}   
For $G$ semisimple, there is a canonical such diffeomorphism, by the Cartan decomposition $G=K\exp(\mf{p})$. 

Under the diffeomorphism $G/K\cong \g/\mf{k}$, the scalar multiplication of $\g/\mf{k}$ translates into a  smooth deformation retraction 
\begin{equation}\label{eq:lambdamap} 
\lambda\colon [0,1]\times G/K\to G/K,\ \ 
(t,gK)\mapsto \lambda_t(gK),\end{equation}
with
$\lambda_{t_1t_2}=\lambda_{t_1}\circ \lambda_{t_2}$, interpolating between 
the identity map and the map $\iota_{eK}\circ \pr_{eK}$ where 
\[\pr_{eK}\colon G/K\to \pt,\ \ \iota_{eK}\colon \pt\to G/K\] 
are projection to and inclusion of the base point $eK\in G/K$.  It determines a {de Rham homotopy operator} on $V$-valued forms, 
\[ T\colon \Omega^{q}(G/K,V)\to \Omega^{q-1}(G/K,V),\] 
given by pullback under \eqref{eq:lambdamap}, followed by integration over $[0,1]$.  Thus $[\d_{Rh},T]=
\id-\pr_{eK}^*\circ\, \iota_{eK}^*$. The homotopy operator has the properties $T\circ T=0$, as well as 
\[ (T\beta)|_{eK}=0\]
for all $\beta\in \Omega^q(G/K,V)$. 
\begin{equation}\label{eq:kbeta}
(\k\beta)(g_1,\ldots,g_p)=(-1)^p T(\beta(g_1,\ldots,g_p)).
\end{equation}
 Thus $[\k,\d]=1-\sj\circ \sq$ where 
 \[ (\sq\beta)(g_1,\ldots,g_p)=\iota_{eK}^*\ \beta(g_1,\ldots,g_p).\]
The vertical homotopy operator determines a `van Est integration map'  
\[ R_{G/K}=\sq\circ (1+\delta\k)^{-1}\circ \si\colon 
\sC^\bullet(\g,K,V)
\to \sC^\bullet(G,V).\] 
\begin{proposition}\label{prop:rightinversegk}
The van Est integration map	$R_{G/K}$ is a right inverse to the van Est differentiation map $\VE_{G/K}$. 
\end{proposition}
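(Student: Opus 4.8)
The plan is to read this off directly from the Zig-zag back-and-forth lemma (Lemma~\ref{lem:backandforth}). Indeed, for the pair $\sX^\bullet=\sC^\bullet(\g,V)_{K-\on{basic}}$ and $\sY^\bullet=\sC^\bullet(G,V)$, the integration map $R_{G/K}=\sq\circ(1+\delta\k)^{-1}\circ\si$ is precisely the zig-zag \eqref{eq:yx}, while the differentiation map $\VE_{G/K}=\sp\circ(1+\d\h)^{-1}\circ\sj$ is the zig-zag \eqref{eq:xy}. Hence $\VE_{G/K}\circ R_{G/K}$ is exactly ``\eqref{eq:yx} followed by \eqref{eq:xy}'', and Lemma~\ref{lem:backandforth} identifies it with $\id_{\sC^\bullet(\g,V)_{K-\on{basic}}}$ as soon as the two conditions $\h\circ\k=0$ and $\sp\circ\k=0$ of \eqref{eq:hk0} are verified. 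So the whole task reduces to checking these two identities.

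To do so, I would work in the model \eqref{eq:betamap}, in which an element of $\sD^{p,q}(G,V)_{K-\on{basic}}$ is a function $\beta\colon G^p\to\Omega^q(G/K,V)$ and $\k$ acts by $(-1)^p T$ on the form factor, as in \eqref{eq:kbeta}. The essential observation is that both $\h$ and $\sp$ feed the last $G$-slot (the ``$;g$''-argument in \eqref{eq:dpq}) an element of $K$: in \eqref{eq:hmap} one sets $g=k$ and integrates over $K$, and $\sp$ in \eqref{eq:pmap} integrates the value at $g=k\in K$. Under the dictionary \eqref{eq:betamap} this ``$;g$''-slot records the base-point $gK\in G/K$ at which the form is evaluated, and for $g=k\in K$ that base-point is $eK$. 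Consequently, applying $\h$ to $\k\psi$ yields an integrand that is a form of the type $T(\cdots)$ evaluated at the base-point $eK$, and this vanishes by the defining property $(T\beta)|_{eK}=0$ of the de Rham homotopy $T$; thus $\h\circ\k=0$. For $\sp\circ\k$ it suffices to look in bidegree $(0,q)$, where $\k=T$ and $\sp$ computes the $K$-average of the value $\psi(e)$ (forced by the basic normalization $\psi(gk)=k^{-1}\cdot\psi(g)$); since $(T\beta)|_{eK}=0$ already, its $K$-average is again $0$, giving $\sp\circ\k=0$. Both conditions therefore collapse to the single fact that $T$ kills the fibre at the base-point.

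The step requiring the most care is the translation under \eqref{eq:betamap}: I would need to confirm that evaluating the ``$;g$''-slot at $g=k\in K$ genuinely corresponds to evaluating the associated form on $G/K$ at $eK$ (and not at some nontrivial translate), so that the vanishing $(T\beta)|_{eK}=0$ is available, and that the residual $K$-averaging concealed in \eqref{eq:pmap} cannot spoil this. Since a covector that is zero at $eK$ stays zero in every frame and under averaging, both parts of \eqref{eq:hk0} follow with no further computation, and Lemma~\ref{lem:backandforth} yields $\VE_{G/K}\circ R_{G/K}=\id$, as claimed. (Note that the other stated property $T\circ T=0$ is not needed here.)
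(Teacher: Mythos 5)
Your proposal is correct and follows essentially the same route as the paper: reduce to the two conditions $\h\circ\k=0$, $\sp\circ\k=0$ via Lemma~\ref{lem:backandforth}, then observe that in the dictionary \eqref{eq:betamap} both $\h$ and $\sp$ only see the values of the form at the base point $eK$ (since the last slot is evaluated at elements of $K$), which vanish for anything in the range of $T$ because $(T\beta)|_{eK}=0$. This matches the paper's proof, including the observation that $K$-equivariance makes evaluation at $g=k\in K$ equivalent to evaluation at $eK$, so no further comment is needed.
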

\begin{proof}
By 	Lemma \ref{lem:backandforth}, it suffices to show that 
$\h\circ \k=0$ and $\sp\circ \k=0$. Given $\beta$ as in \eqref{eq:betamap}, let $\psi\colon 
G^p\times G\to V\otimes (\wedge^q\g^*)_{\mf{k}-\on{hor}}$ be the corresponding $K$-equivariant map.  
The formula \eqref{eq:hmap} shows that $\h\psi=0$ when $\psi|_{G^p\times K}=0$. 
Consequently, in the differential form picture, $\h\beta=0$ whenever
$\beta(g_1,\ldots,g_p)|_{eK}=0$ for all $(g_1,\ldots,g_p)\in G^p$. In particular, this applies when $\beta$ is in the range of $T$. 
 This shows $\h\circ \k=0$; the argument for $\sp\circ \k=0$ is similar.  
\end{proof}

We will now give a more explicit description of $R_{G/K}$. 
For $(g_1,\ldots,g_p)\in G^p$ and $(t_1,\ldots,t_p)\in [0,1]^p$ let 
\begin{equation}\label{eq:gammaformula0}
\gamma^{(p)}_{t_1,\ldots,t_p}(g_1,\ldots,g_p)=\big(\lambda_{t_1}\circ L(g_1)\cdots \circ \lambda_{t_p}\circ L(g_p)\big)(eK).
\end{equation}
For fixed $(g_1,\ldots,g_p)$, this defines a map $\gamma^{(p)}(g_1,\ldots,g_p)\colon [0,1]^p\to G/K$. \medskip

\begin{proposition}
Given $\alpha\in \sC^p(\g,V)_{K-\on{basic}}$, let $\alpha_{G/K}\in \Omega^p(G/K,V)^G$ 	be the corresponding $G$-equivariant form. Then 
\begin{equation}\label{eq:veintk}
R_{G/K}(\alpha)(g_1\ldots,g_p)=\int_{[0,1]^p}\gamma^{(p)}(g_1,\ldots,g_p)^*\alpha_{G/K}\end{equation} 
\end{proposition}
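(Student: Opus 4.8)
The plan is to evaluate the zig-zag \eqref{eq:yx} defining the integration map, namely $R_{G/K}=(-1)^p\,\sq\circ(\delta\k)^p\circ\si$ on degree $p$ cochains, using the concrete descriptions of the operators under the identification \eqref{eq:betamap}. There $\si\alpha=\alpha_{G/K}$, viewed as an element of $\sD^{0,p}$; the vertical homotopy $\k$ acts by $T$ up to sign via \eqref{eq:kbeta}; the horizontal differential $\delta$ is given by \eqref{eq:deltabeta}; and $\sq=\iota_{eK}^*$. The argument splits into a purely algebraic reduction of the zig-zag to a single iterated operator, followed by a differential-geometric identification of that operator with the cube integral.

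First I would reduce the zig-zag. Setting $\zeta_k=\k(\delta\k)^k\si\alpha\in\sD^{k,p-k-1}$, I claim by induction on $k$ that
\[ \zeta_k(g_1,\ldots,g_k)=T\,L(g_k)^*\,T\,L(g_{k-1})^*\cdots T\,L(g_1)^*\,T\,\alpha_{G/K}, \]
the base case $\zeta_0=\k\si\alpha=T\alpha_{G/K}$ being immediate. In the inductive step, $\delta\zeta_k$ is the sum of the `face' terms of \eqref{eq:deltabeta} --- each again of the form $T(\,\cdot\,)$ --- together with the last term $(-1)^{k+1}L(g_{k+1})^*\zeta_k$; applying the next $\k=(-1)^{k+1}T$ kills the face terms because $T\circ T=0$, and the two signs cancel on the survivor, giving $\zeta_{k+1}=T\,L(g_{k+1})^*\zeta_k$. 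At the last stage $R_{G/K}(\alpha)=(-1)^p\,\sq\,\delta\zeta_{p-1}$, where now $\sq=\iota_{eK}^*$ kills the face terms by the property $(T\beta)|_{eK}=0$, and the surviving last term yields
\[ R_{G/K}(\alpha)(g_1,\ldots,g_p)=\iota_{eK}^*\,L(g_p)^*\,T\,L(g_{p-1})^*\,T\cdots L(g_1)^*\,T\,\alpha_{G/K}. \]
The two vanishing properties used here, $T\circ T=0$ and $(T\beta)|_{eK}=0$, are precisely the hypotheses $\h\circ\k=0$ and $\sp\circ\k=0$ of Proposition \ref{prop:rightinversegk} in their differential-form incarnation.

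The second step, which I expect to be the main obstacle, is to identify this iterated homotopy operator with the integral in \eqref{eq:veintk}. Here I would unwind each $T$ through its definition as the fiber integral $T\beta=\int_{[0,1]}\lambda^*\beta$ (contracting the homotopy vector field $\partial_t$ into the first slot), and assemble the $p$ resulting single integrals into one integral over $[0,1]^p$ by Fubini. Labelling the homotopy variable of the $j$-th factor $T$ (counted from the inside) by $t_j$, the chain rule together with $\lambda_{t_1t_2}=\lambda_{t_1}\circ\lambda_{t_2}$ shows that the point at which $\alpha_{G/K}$ is evaluated is exactly $\gamma^{(p)}_{t_1,\ldots,t_p}(g_1,\ldots,g_p)$, and that the homotopy direction of the $j$-th $T$, transported through the intervening maps, is $\partial_{t_j}\gamma^{(p)}$. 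Consequently the integrand equals $(\gamma^{(p)}(g_1,\ldots,g_p)^*\alpha_{G/K})(\partial_{t_1},\ldots,\partial_{t_p})$, and the multiple integral is $\int_{[0,1]^p}\gamma^{(p)}(g_1,\ldots,g_p)^*\alpha_{G/K}$.

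The delicate point in this second step is the bookkeeping of slots and orientation: one must verify that the innermost $T$ contributes its direction to the first argument of $\alpha_{G/K}$, the next $T$ to the second, and so on, so that no permutation sign intervenes and the product orientation of $[0,1]^p$ is the standard one. I would confirm this by checking the cases $p=1$ and $p=2$ directly --- there one sees $R_{G/K}(\alpha)(g_1)=(T\alpha_{G/K})(g_1K)$ and $R_{G/K}(\alpha)(g_1,g_2)=\big(T\,L(g_1)^*\,T\,\alpha_{G/K}\big)(g_2K)$ --- and then running the induction by peeling off the outermost $T$ and $L(g_p)^*$, which keeps the signs and the emerging coordinate $t_p$ under control.
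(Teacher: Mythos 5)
Your proposal is correct and is essentially the paper's own proof: the same zig-zag collapse using $T\circ T=0$ and $(T\beta)|_{eK}=0$ to reach $R_{G/K}(\alpha)(g_1,\ldots,g_p)=\iota_{eK}^*\,L(g_p)^*\,T\,L(g_{p-1})^*\,T\cdots L(g_1)^*\,T\,\alpha_{G/K}$ (your induction on $\zeta_k$ just packages the paper's ``keep only the last term'' bookkeeping, signs included), followed by the same identification of the iterated $T$'s with the cube integral via $\lambda^{(1)}\circ L(g_1)\circ\cdots\circ\lambda^{(p)}\circ L(g_p)\circ\iota_{eK}=\gamma^{(p)}(g_1,\ldots,g_p)$, where the paper argues by functoriality of pullback plus Fubini rather than your contraction-by-contraction chain rule. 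One small point of comparison: your conclusion that the \emph{standard} orientation of $[0,1]^p$ emerges is the one consistent with the stated convention $[\d_{Rh},T]=\id-\pr_{eK}^*\circ\,\iota_{eK}^*$ (one can check this directly for $p=2$ with $G=\R^2$, $K=\{e\}$, $\lambda_t(v)=tv$), whereas the paper's closing parenthetical records the volume element as $\d t_p\wedge\cdots\wedge\d t_1$.
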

\begin{proof}
We will calculate $R_{G/K}(\alpha)=(-1)^p \sq\circ (\delta\k)^p\,  \si\ \alpha$ for $\alpha\in \sC^p(\g,V)$. We have that
\[ \si\alpha=\alpha_{G/K},\] 
viewed as an element of $\Omega^p(G/K,V)=\sD^{0,p}(G,V)_{K-\on{basic}}$. Next, $\delta\k\si \alpha\in \sD^{1,p-1}(G,V)_{K-\on{basic}}$ is given by 
\[ (\delta\k\si \alpha)(g_1)=T\ \alpha_{G/K}-L(g_1)^*T\ \alpha_{G/K}.\]
%\in \Omega^{p-1}(G/K,V)\]
The next application of $\delta\k$ (or of $\sq$, if $p=1$) will annihilate the first term, due to 
$T\circ T=0$ (respectively, due to $\iota_{eK}^*\circ T=0$). Hence we only need to keep the second term, and we find that 
$(\delta\k)^2\si \alpha\in \sD^{2,p-2}(G,V)_{K-\on{basic}}$ is given by 
\[ ((\delta\k)^2\si\alpha)(g_1,g_2)=
T \circ L(g_2)^*\circ T\ \alpha_{G/K}
-T \circ L(g_1g_2)^*\circ T\ \alpha_{G/K}+L(g_2)^*\circ T\circ  L(g_1)^*\circ T \ \alpha_{G/K}.
\]
By the same reasoning as before, we need only keep the last term, since all terms starting with $T$ will be annihilated by the subsequent application of $\k$ (respectively $\sq$, if $p=2$). 
Proceeding in this manner, we arrive at the formula
 \[ R_{G/K}(\alpha)(g_1\ldots,g_p)=
\iota_{eK}^* \circ L(g_p)^*  \circ T\circ \cdots \circ L(g_1)^*\circ T\ \alpha_{G/K}.\]
(The $(-1)^p$ sign in the formula for $R_{G/K}$ is compensated by the alternating signs 
in the $L(g_i)^*\circ T$ contributions.) 
By definition, each $T$ involves pullback under the map $\lambda$, followed by integration over $[0,1]$. Denoting by $t_i$ the  variable for the $i$-th such integration, and by $\lambda^{(i)}$ the map corresponding to $\lambda$  for the variable $t_i$, 
we arrive at 
\[R_{G/K}(\alpha)(g_1\ldots,g_p)=  \int_{t_p \in [0,1]}\cdots \int_{t_1\in [0,1]}\iota_{eK}^* L(g_p)^* (\lambda^{(p)})^* \cdots 
L(g_1)^* (\lambda^{(1)})^* \alpha_{G/K}.
\] 
On the other hand, by definition, 
\[ \lambda^{(1)}\circ L(g_1)\circ \cdots \lambda^{(p)}\circ L(g_p)\circ \iota_{eK}=
\gamma^{(p)}(g_1,\ldots,g_p).\]
This gives \eqref{eq:veintk} (with the orientation of $[0,1]^p$ given by the volume element 
$\d t_p\wedge \cdots \wedge \d t_1$). 
\end{proof}

 In summary, we obtain the following cochain-level version of van Est's theorem:

\begin{theorem}[Van Est theorem for Lie groups] \label{th:vanest}
	Suppose $K$ is a maximal compact subgroup of the Lie group $G$. Then the van Est differentiation map \[ \VE_{G/K}\colon \sC^\bullet(G,V)\to 
	\sC^\bullet(\g,V)_{K-\on{basic}}\] defined by \eqref{eq:vek}
	is a homotopy equivalence. The van Est integration map 
	\[ R_{G/K}\colon \sC^\bullet(\g,V)_{K-\on{basic}}\to \sC^\bullet(G,V)
	\]
	 given by \eqref{eq:veintk} is a right inverse \emph{at the level of cochains}. 
\end{theorem}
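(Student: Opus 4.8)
The plan is to assemble the statement from machinery already in place, recognizing that the two assertions have different characters. The right-inverse claim $\VE_{G/K}\circ R_{G/K}=\id$ is precisely Proposition \ref{prop:rightinversegk}, and the explicit shape \eqref{eq:veintk} of $R_{G/K}$ was derived in the proposition immediately preceding it; so for that half I would simply recall those arguments. The genuinely new content is the assertion that $\VE_{G/K}$ is a \emph{homotopy equivalence}, and here the idea is to exhibit it as a composite of two homotopy equivalences coming from the Perturbation Lemma applied separately in the horizontal and vertical directions of the double complex $\sD^{\bullet,\bullet}(G,V)_{K-\on{basic}}$.

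For the homotopy-equivalence half, I would first record the identity $\VE_{G/K}=\sp'\circ\sj$, immediate from $\sp'=\sp\,(1+\d\h)^{-1}$ and the definition $\VE_{G/K}=\sp\circ(1+\d\h)^{-1}\circ\sj$ (this operator agrees with the formula \eqref{eq:vek} by Theorem \ref{th:K}). The horizontal homotopy $\h$ of \eqref{eq:hmap} exists for any compact $K$, and since $\sp\circ\si=\id$, the discussion following Lemma \ref{lem:perturbed} shows that $\si$ is a homotopy equivalence from $\sC^\bullet(\g,V)_{K-\on{basic}}$ into $\on{Tot}^\bullet(\sD)$ with homotopy inverse $\sp'$. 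I would then run the mirror-image argument vertically: because $K$ is \emph{maximal} compact, $G/K$ is diffeomorphic to the vector space $\g/\mf{k}$ and so carries the deformation retraction \eqref{eq:lambdamap}, which yields the vertical homotopy $\k$ of \eqref{eq:kbeta} together with a cochain map $\sq$ satisfying $\sq\circ\sj=\id$; applying Lemma \ref{lem:perturbed} in the vertical direction shows that $\sj$ is a homotopy equivalence from $\sC^\bullet(G,V)$ into $\on{Tot}^\bullet(\sD)$. Composing, $\VE_{G/K}=\sp'\circ\sj$ is the composite of the homotopy equivalence $\sj$ with the homotopy inverse $\sp'$ of the homotopy equivalence $\si$, hence is itself a homotopy equivalence.

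For the right-inverse half I would invoke Lemma \ref{lem:backandforth}, whose hypotheses \eqref{eq:hk0}, namely $\h\circ\k=0$ and $\sp\circ\k=0$, were verified in Proposition \ref{prop:rightinversegk} using that any form in the range of the de Rham homotopy $T$ vanishes at the base point $eK$. The lemma then gives that the zig-zag defining $R_{G/K}$ followed by the zig-zag defining $\VE_{G/K}$ is the identity of $\sC^\bullet(\g,V)_{K-\on{basic}}$, which is exactly $\VE_{G/K}\circ R_{G/K}=\id$ at the cochain level. The step I expect to require the most care is the vertical application of the Perturbation Lemma: one must confirm that maximal-compactness really supplies all the data the lemma demands — not merely the homotopy $\k$ with $[\d,\k]=1-\sj\circ\sq$, but also that $\sq$ is a cochain map with $\sq\circ\sj=\id$, so that the conclusion ``$\sj$ is a homotopy equivalence'' is warranted. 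This is the one place where maximality is indispensable: for non-maximal $K$ the space $G/K$ need not be contractible, $\k$ need not exist, and only the localized form of the statement survives.
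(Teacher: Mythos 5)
Your proposal is correct and follows the same route as the paper, which states this theorem as a summary of the preceding constructions: the homotopy-equivalence claim comes from applying the Perturbation Lemma both horizontally (giving $\sp'$ as homotopy inverse to $\si$) and vertically (giving $\sj$ as a homotopy equivalence, using the retraction of $G/K\cong\g/\mf{k}$ available by maximality), while the right-inverse claim is exactly Proposition \ref{prop:rightinversegk} via Lemma \ref{lem:backandforth}. Your closing observation about where maximality enters, and why only the localized statement survives for non-maximal $K$, likewise matches the remark following the theorem in the paper.
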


\begin{remark}
Sometimes, it is convenient to work with the \emph{normalized subcomplex} $\wt{\sC}(G,V)$, consisting of functions $f\in C^\infty(G^p,V)$ with the property that $f(g_1,\ldots,g_p)=0$ whenever $g_i=e$ for some $i$.The inclusion of the normalized subcomplex is well-known to be a homotopy equivalence (see, e.g., \cite{mo:no}). Theorem \ref{th:vanest} holds for the normalized subcomplex, with the same proof. 
\end{remark}

	If $K$ is any compact Lie subgroup of $G$ (not necessarily maximal compact), one obtains a similar conclusion by replacing $\sC(G,V)$ 
	with the \emph{localized complex} 
	\[ \sC(G,V)_e=\sC(G,V)/\sim\]
	of \emph{germs of functions} 
	$G^p\to V$ at $(e,\ldots,e)\in G^p$, and using a germ at $eK$ of a diffeomorphism  $G/K\to \g/\mf{k}$ to define a germ of a retraction $\lambda_t$. One hence obtains a homotopy equivalence 
	\[ \VE_{G/K}\colon \sC^\bullet(G,V)_e\to 
	\sC^\bullet(\g,V)_{K-\on{basic}}\]
	with a homotopy inverse $R_{G/K}$ which is also a right inverse. In particular, this is true for $K=\{e\}$, cf.~ \cite{hou:int,swi:coh}.

%\section{The van Est double complex for Lie groupoids}\label{sec:crainicdouble}

\section{Van Est theory for Lie groupoids}
\label{sec:vanest}
We will next review the cochain complexes for Lie algebroids and Lie groupoids, and the van Est double complex connecting them. We then show how certain horizontal and vertical homotopy operators on the double complex define  van Est differentiation and integration maps, and finally show that the integration map is right inverse to the differentiation. Only at the end, we will derive the `explicit formulas' for the integration and differentiation. For basic information on Lie groupoids and Lie algebroids, we refer to  \cite{cra:lect,duf:po,moe:fol}. The van Est map for Lie groupoids was introduced by Weinstein-Xu in \cite{wei:ext} and further studied by Crainic \cite{cra:dif}; for further generalizations and applications see, \cite{aba:wei,cab:hom,cab:loc,lib:ve,meh:qgr,pfl:loc}.

\subsection{The simplicial manifold $B_pG$}
Let $G\rra M$ be a Lie groupoid, with source and target maps denoted $\sz,\tz\colon G\to M$.  Elements $g,h\in \G$ are \emph{composable} if $\sz(g)=\tz(h)$; in this case their groupoid product is denoted as $gh$. We denote by 
\[ B_pG=\{(g_1,\ldots,g_p)|\ \sz(g_i)=\tz(g_{i+1}),\ \ 0< i<p\}\]
the space of \emph{$p$-arrows}; by convention $B_0G=M$. Every $p$-arrow comes with $p+1$ base points $(m_0,\ldots,m_p)$, where $m_i=\sz(g_i)=\tz(g_{i+1})$. 
The collection of spaces $B_pG$ defines a simplicial manifold $B_\bullet G$ called the \emph{nerve} of the groupoid. The face map
$\partial_i\colon B_p\G\to 
B_{p-1}\G$
drops the $i$-th base point: 
\[  \partial_i(g_1,\ldots,g_p)=\begin{cases}
(g_2,\ldots,g_p), & i=0,\\
(g_1,\ldots,g_ig_{i+1},\ldots,g_p),&0<i<p,\\
(g_1,\ldots,g_{p-1}), &i=p,
\end{cases}
\]
while the $i$-th degeneracy map $\eps_i\colon B_pG\to B_{p+1}G$ repeats the $i$-th base point  by inserting a trivial arrow:
\[\eps_i(g_1,\ldots,g_p)=(g_1,\ldots,g_{i},m_i,g_{i+1},\ldots,g_p),\ \ i=0,\ldots,p.\]
%(with the obvious interpretations in the cases $i=0$ or $p$). 
%
Given a $G$-action on manifold $Q$, with anchor $\Phi\colon Q\to M$, 
one obtains a simplicial manifold 
\begin{equation}\label{eq:bpq}
B_pG\times_M Q
\end{equation}
where the fiber product is with respect to $\Phi$ and the map taking the $p$-arrow 
$(g_1,\ldots,g_p)\in B_pG$
to the base point $m_p$. 
%
%Recall that such an action involves an anchor map $\Phi\colon Q\to M$ and a map $(g,x)\mapsto g\cdot x$, defined whenever $\sz(g)=\Phi(x)$, such that
%$\Phi(g\cdot x)=\tz(g)$, and satisfying $(g_1g_2)\cdot x=g_1\cdot (g_2\cdot x)$ whenever both sides are defined. The $p$-arrows of corresponding action groupoid $G\ltimes Q\rra Q$ define a simplicial manifold $B_p(G\ltimes Q)$ with a map $\Phi_p\colon B_p(G\ltimes Q)\to B_pG$. Any 
The face and degeneracy maps maps are 
\begin{align*}
  \partial_i(g_1,\ldots,g_p;x),&=\begin{cases}
(g_2,\ldots,g_p;x), & i=0,\\
(g_1,\ldots,g_ig_{i+1},\ldots,g_p;x),&0<i<p,\\
(g_1,\ldots,g_{p-1};g_p x), &i=p,\end{cases}\\
\epsilon_i(g_1,\ldots,g_p;x)&= (g_1,\ldots,g_i,m_i,g_{i+1},\ldots,g_p;x),\ \ \ 0\le i\le p.
\end{align*}
(More conceptually, these formulas are explained through the identification $B_pG\times_M Q\cong B_p(G\ltimes Q)$, where $G\ltimes Q\rra Q$ is the action groupoid.)  
The manifolds $B_pG\times_M Q$ 
come equipped with 
$p+1$ commuting $G$-actions  (cf.~ \eqref{eq:agg}):
\begin{equation}\label{eq:actions} 
a\cdot (g_1,\ldots,g_p;x)=\begin{cases}
(ag_1,g_2,\ldots,g_p;x) & i=0,\\
(g_1,\ldots,g_i a^{-1},a g_{i+1},\ldots,g_p;x)&0<i<p,\\
(g_1,\ldots,g_{p-1},g_p a^{-1};a\cdot x) &i=p.
\end{cases}
\end{equation}
with  anchor map $(g_1,\ldots,g_p;x)\mapsto m_i$.

\subsection{The van Est double complex}
\subsubsection{Groupoid complex}
Given a representation of the groupoid $G\rra M$ on a vector bundle $V\to M$, taking $Q=V$ in \eqref{eq:bpq}, 
we obtain a simplicial vector bundle $ B_pG\times_M V\to B_pG$. 
The groupoid cochain complex $\big(\sC(G,V),\delta\big)$ has graded components 
\[ \sC^p(G,V)=\Gamma(B_pG\times_M V),\]
while the differential $\delta$ is given on $p$-cochains by $\delta=\sum_{i=0}^{p+1}(-1)^i \partial_i^*$. 
In the case of trivial coefficients $V=M\times \R$, we write $\sC(G)=\sC(G,M\times \R)$. 
\subsubsection{Lie algebroid complex}
Let $A\Ra M$ be the Lie algebroid of $G\rra M$. Thus, $A$ is the vector bundle whose sections are the left-invariant vector fields on $G$ (tangent to the $\tz$-fibers); for $\xi\in \Gamma(A)$ we denote by $\xi^L$ the corresponding left-invariant vector field. The anchor map $\a\colon A\to TM$ is characterized by its property 
$\xi^L\sim_\sz -\a(\xi)$. The $G$-representation on $V$ determines an $A$-representation on $V$. (Every $\xi\in \Gamma(A)$ is realized as the 
derivative of a 1-parameter family of bisections of $G$. 
The group of bisections acts linearly on the sections of $V$, and by differentiation one obtains the flat $A$-connection $\xi\mapsto \nabla_\xi$ defining the $A$-representation.) Let 
$\big(\sC(A,V),\d_{CE}\big)$
be the resulting \emph{Lie algebroid complex} (or \emph{Chevalley-Eilenberg complex}), 
with $p$-cochains 
\[ \sC^p(A,V)=\Gamma(V\otimes\wedge ^p A^* ),\] 
and with the Chevalley-Eilenberg differential $\d_{CE}$.  
%\begin{align*} (\d_{CE}\phi)(\xi_1,\ldots,\xi_{p+1})&=\sum_i (-1)^{i+1} \nabla_{\xi_i}\phi(\xi_1,\ldots,\wh{\xi_i},\ldots \xi_{p+1})\\ &\ +\sum_{i<j} (-1)^{i+j} \phi([\xi_i,\xi_j],\xi_1,\ldots,\wh{\xi_i},\ldots,\wh{\xi_j},\ldots,\xi_{p+1}).\end{align*}
%
For $\xi\in\Gamma(A)$, we denote by $\iota_\xi$ the operator on $\sC(A,V)$ given by contraction, and by $\L_\xi=[\d_{CE},\iota_\xi]$ the Lie derivative. On 
$\sC^0(A,V)=\Gamma(V)$, we have that $\L_\xi=\nabla_\xi$. In the case of the trivial representation on $V=M\times \R$, the connection is $\nabla_\xi=\L_{\a(\xi)}$ where $\a\colon A\to TM$ is the anchor of $A$; we will write $\sC(A)=\sC(A,M\times \R)$.\medskip

\subsubsection{Double complex}
The two complexes $\sX=\sC(A,V),\  \, \sY=\sC(G,V)$ are related by a van Est double complex, due to Crainic \cite{cra:dif}.  Taking $Q=G,\ \Phi=\tz$ in \eqref{eq:bpq}, with the $G$-action by left multiplication,  
define a simplicial fiber bundle 
\begin{equation}\label{eq:kappap}
 \kappa_p\colon E_p G=B_pG\times_M G\to B_pG.\end{equation}
For each $p$ this is a principal $G$-bundle, with 
anchor map 
\[ \pi_p(g_1,\ldots,g_p;g)=\sz(g),\] 
and principal action 
\[ a\cdot (g_1,\ldots,g_p;g)=(g_1,\ldots,g_p;ga^{-1}).\] 
(For background on principal bundles for Lie groupoids, see for example \cite{moe:fol}.) Each 
of these principal bundles is actually trivial, with a trivializing section 
\begin{equation}\label{eq:up}
 \nu_p\colon B_pG\to E_pG,\ \ (g_1,\ldots,g_p)\mapsto (g_1,\ldots,g_p;m)\end{equation}
where $m=\sz(g_p)$.

The face and degeneracy maps are principal bundle morphisms, making $\kappa\colon E G\to B G$ into a simplicial principal bundle; the `universal bundle' of the Lie groupoid $G$. (Note that \eqref{eq:up} are \emph{not} simplicial maps, and indeed $EG$ is non-trivial as a simplicial principal bundle.) The 
van Est double complex 
\[ (\sD^{\bullet,\bullet}(G,V),\delta,\d)\] 
is defined as follows. 
\begin{itemize}
\item The bigraded summands of the double complex
are 
\begin{equation}\label{eq:2altdef} 
\sD^{p,q}(G,V)=\Gamma\big(\pi_p^*(V\otimes \wedge^q A^*)\big),\end{equation}
generalizing the description \eqref{eq:dpq} in the case of Lie groups. 
\item 
$\delta$ is the simplicial differential on sections of the simplicial vector bundle 
\[ \pi_\bullet^*(\wedge^q A^*\otimes V)\to E_\bullet G.\] 
\item 
$\d=(-1)^p\d_{CE} $ on elements of bidegree $(p,q)$, with the Chevalley-Eilenberg differential on 
\begin{equation}\label{eq:dlacomplex}
\sD^{p,\bullet}(G,V)\cong \sC^\bullet(\pi_p^*A,\pi_p^*V)\end{equation}
In more detail, let $\F$ be the foliation of $E_pG$ given by the $\kappa_p$-fibers; thus $T_\F E_pG$ is the vertical bundle. The isomorphism 
$\pi_p^* A\cong T_\F E_pG$ defines a Lie algebroid structure on 
$\pi_p^*A$. On the other hand, the  isomorphism $\tz^*V\cong \sz^*V$ given by the $G$-representation extends to an isomorphism, for any $p$, 
\begin{equation}\label{eq:kappapi}
\kappa_p^*(B_pG\times_M V)\cong \pi_p^*V.
\end{equation} Since this bundle is trivial along the  leaves of $\F$, it comes 
with a natural representation of the Lie algebroid $\pi_p^*A=T_\F E_pG$, and the right hand side of \eqref{eq:dlacomplex} is its Lie algebroid complex.
\end{itemize}
Furthermore, the double complex comes with horizontal and vertical augmentation maps: 
\begin{itemize}
\item  $\si\colon \sC^\bullet(A,V)\to 
\sD^{0,\bullet}(G,V)$ is given in degree $q$ by the pullback $\pi_0^*$ (using \eqref{eq:dlacomplex} for $p=0$). 
%
%\begin{equation}\label{eq:auge1} \pi_0^* \colon \sC^q(A,V)\to \sC^q(\pi_0^*A,\pi_0^*V),\end{equation}
\item 
$\sj\colon \sC^\bullet(\G,V)
\to \sD^{\bullet,0}(G,V)$ is given in degree $p$ by the pullback map $\kappa_p^*$ (using \eqref{eq:kappapi}).
%
%\begin{equation}\label{eq:auge}  \kappa_p^*\colon \Gamma(B_pG\times_M V)\to \Gamma(\pi_p^*V).\end{equation}
\end{itemize}
The augmentation maps define cochain maps to the total complex
%$(\on{Tot}^\bullet(\sD(G,V)),\,\d+\delta)$:
\[ \sC^\bullet(G,V)\stackrel{\sj}{\lra} \on{Tot}^\bullet(\sD(G,V))\stackrel{\si}{\longleftarrow}\sC^\bullet(A,V) .\]

\begin{remark}
Sometimes, it is better to work with the \emph{normalized subcomplex}, defined by the requirement that all pull-backs under the degeneracy maps $\epsilon_i$ are equal to zero. 
We will indicate the normalized 
subcomplexes (and the spaces of functions and sections defining them) by a tilde; for example 
\[ \wt{\sC}^\bullet(G,V)=\wt{\Gamma}(B_\bullet G\times_M V),\ \ 
\wt{\sD}^{\bullet,\bullet}(G,V)=\wt{\Gamma}(\pi_p^*(V\otimes \wedge^q A^*))
.\] 
By a general result for simplicial manifolds (see e.g. \cite{mo:no}),   
the inclusion $\wt{\sC}(G,V)\hra \sC(G,V)$ is  a homotopy equivalence.

As another variation, we will consider \emph{localized} versions of these complexes, with respect to the submanifold $M\subset B_pG$ of constant $p$-arrows. These cochain complexes 
\[ \sC^\bullet(G,V)_M={\Gamma}(B_\bullet G\times_M V)_M,\] 
are given by \emph{germs of sections} along the submanifold $M\subset B_\bullet G$. 
There is also a localized version $\sD^{\bullet,\bullet}(G,V)_M$ of the double complex
(and its normalized subcomplex), working with germs of sections along $M\subset E_\bullet G$. Note that the localized version (as well as its normalized subcomplex) also makes sense for \emph{local Lie groupoids}.  
\end{remark}

\subsection{Differentiation}
\subsubsection{The horizontal homotopy} 
The simplicial universal bundle $E G$ comes with a simplicial retraction onto its submanifold $M$ (see \cite{se:cl} and \cite[Appendix A.2]{lib:ve}). This is reflected in the existence of a homotopy operator on the double complex 
$\sD(G,V)$. Consider the maps
\[ h_p\colon E_pG\to E_{p+1}G,\ (g_1,\ldots,g_p;g)\mapsto (g_1,\ldots,g_p,g;m)\]
where $m=\sz(g)$. Since  $\pi_{p+1}\circ h_{p}=\pi_{p}$, 
these lift to fiberwise isomorphisms of vector bundles $\pi_{p}^*(\wedge^q A^*\otimes V)\to \pi_{p+1}^*(\wedge^q A^*\otimes V)$, defining a  pullback map on sections. 
\begin{lemma}
The map 
\[ \h\colon \sD^{p,q}(G,V)\to \sD^{p-1,q}(G,V),\ \ \psi\mapsto (-1)^p h_{p-1}^*\psi\]
satisfies 
\[ [\h,\delta]=1-\si\circ \sp,\]
where $\sp\colon \sD^{0,\bullet}(G,V)\to \sC^\bullet(A,V)$ is the 
left inverse to $\si=\pi_0^*$
given by pullback under the inclusion  $u\colon M\hra E_0\G=G$:
\[ \sp=u^*\colon \Gamma(\pi_0^*(\wedge^q A^*\otimes V))\to \Gamma(\wedge^q A^*\otimes V).\] 
\end{lemma}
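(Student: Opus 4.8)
The plan is to deduce the homotopy identity $[\h,\delta]=1-\si\circ\sp$ from a package of \emph{extra-degeneracy} relations between the maps $h_p$ and the face maps $\partial_i$ of the simplicial manifold $E_\bullet G$, and then to perform the sign bookkeeping forced by the normalizations $\h=(-1)^p h_{p-1}^*$ and $\delta=\sum_i(-1)^i\partial_i^*$.

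First I would compute each composite $\partial_i\circ h_p$ directly from the defining formula $h_p(g_1,\ldots,g_p;g)=(g_1,\ldots,g_p,g;m)$, with $m=\sz(g)$, and from the face maps of $E_pG=B_pG\times_M G$ (the case $Q=G$ acting by left multiplication). The groupoid unit laws $\tz(m)=m$ and $gm=g$ should yield, for $p\ge 1$,
\[ \partial_i\circ h_p=h_{p-1}\circ\partial_i\quad(0\le i\le p),\qquad \partial_{p+1}\circ h_p=\id_{E_pG}, \]
the top relation being exactly the right-unit law, and the commuting relations holding because the faces $\partial_i$ with $i\le p$ do not touch the slot $g$ into which the adjoined unit $m$ is attached. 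For $p=0$ the relation $\partial_1\circ h_0=\id_{E_0G}$ persists, but the would-be relation for $i=0$ degenerates into the exceptional identity $\partial_0\circ h_0=u\circ\pi_0$, and this is the sole source of the projection term.

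These base-space identities should transfer verbatim to pullback operators on sections of $\pi_\bullet^*(\wedge^q A^*\otimes V)$, because every face map and every $h_p$ preserves the anchor $\pi_\bullet$ (each equals $(\ldots;g)\mapsto\sz(g)$ on the relevant slot). Hence the simplicial vector-bundle structure maps are the tautological identifications, pullbacks compose on the nose, and equalities of base maps become equalities of operators $\partial_i^*h_{p-1}^*=h_p^*\partial_i^*$ and $h_p^*\partial_{p+1}^*=\id$. I would record this anchor-preservation check explicitly, since it is what licenses composing the bundle pullbacks and confirms the representation factor is carried along correctly.

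Finally I would substitute into $\h\delta+\delta\h$ on $\psi\in\sD^{p,q}$. For $p\ge 1$, the term $i=p+1$ in $\h\delta\psi$ produces $+\psi$ via $\partial_{p+1}\circ h_p=\id$, while the $p+1$ terms $0\le i\le p$ in $\h\delta\psi$ coincide with those of $\delta\h\psi$ up to the overall sign mismatch between $(-1)^{p+1}$ and $(-1)^p$; they cancel, giving $[\h,\delta]\psi=\psi=(1-\si\circ\sp)\psi$, since $\sp$ vanishes in bidegree $(p,q)$ for $p>0$. For $p=0$ one has $\h\psi=0$, so $\delta\h\psi=0$, and the surviving contribution is $-(\partial_0\circ h_0)^*\psi=-(u\circ\pi_0)^*\psi=-\si\circ\sp\,\psi$, yielding $[\h,\delta]\psi=(1-\si\circ\sp)\psi$. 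I expect the only real obstacle to be the sign and index bookkeeping together with the correct isolation of the $p=0$ boundary term, since that is the single place where the left inverse $\sp=u^*$ is born.
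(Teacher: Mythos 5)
Your proposal is correct: all three families of identities you assert do hold, namely $\partial_i\circ h_p=h_{p-1}\circ\partial_i$ for $0\le i\le p$ (when $p\ge 1$), $\partial_{p+1}\circ h_p=\id_{E_pG}$, and, in the exceptional case $p=0$, $\partial_1\circ h_0=\id$ together with $\partial_0\circ h_0=u\circ\pi_0$; with the sign bookkeeping you describe, these give $[\h,\delta]=1-\si\circ\sp$ exactly as claimed, since $(u\circ\pi_0)^*=\pi_0^*\circ u^*=\si\circ\sp$. The paper's own proof has the same mathematical substance but a different organization: it expands $(\delta\h\psi)(g_1,\ldots,g_p;g)$ and $(\h\delta\psi)(g_1,\ldots,g_p;g)$ pointwise and cancels term by term, never stating the simplicial relations as such. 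Those cancellations are precisely your commutation relations applied argument by argument; the surviving term $\psi(g_1,\ldots,g_p;g)$ comes from the unit law $g\cdot\sz(g)=g$, i.e. your relation $\partial_{p+1}\circ h_p=\id$; and the projection term in horizontal degree zero is your identity $\partial_0\circ h_0=u\circ\pi_0$. What your packaging buys: the commutator computation becomes formal and sign-transparent; the anchor-preservation checks $\pi_p\circ\partial_i=\pi_{p+1}$ and $\pi_{p+1}\circ h_p=\pi_p$, which license all the pullback operators, are made explicit (the paper records this only for $h_p$); and it exhibits $h_\bullet$ as an extra degeneracy (simplicial contraction) of $E_\bullet G$ augmented over $M$ --- exactly the structural fact the paper alludes to, citing Segal, just before the lemma but does not actually use in its proof. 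What the paper's route buys is brevity. One caveat: your parenthetical justification that the faces $\partial_i$ with $i\le p$ ``do not touch the slot'' is too quick at $i=p$, since $\partial_p$ on $E_{p+1}G$ multiplies $g_p$ into the adjoined entry $g$; the identity still holds there, but only because $\sz(g_pg)=\sz(g)$, which the direct computation you promise to carry out would of course reveal.
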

\begin{proof}
Let $\psi\in \sD^{p,q}(G,V)$. If $p=0$ we have that $\h\psi=0$, while $(\delta\psi)(g_1;g)=
\psi(g)-\psi(g_1g)$ and therefore $(h\delta\psi)(g)=-(\delta\psi)(g;m)=\psi(g)-\psi(m)$. 
For $p<0$ we have that 
$(\h\psi)(g_1,\ldots,g_{p-1};g)=
(-1)^p\psi(g_1,\ldots,g_{p-1},g;m)$ and therefore
\begin{align*}
(\delta\h\psi)(g_1,\ldots,g_p;g)&=
(-1)^{p}\Big( 
\psi(g_2,\ldots,g_p;m)-\psi(g_1g_2,\ldots,g_p,g;m)\pm \cdots \\
&\ \ \ +(-1)^{p}\psi(g_1,\ldots,g_{p-1},g;m)\Big)
\Big)
\end{align*}	
	Similarly, 
\begin{align*}
(\h\delta\psi)(g_1,\ldots,g_p;g)&=
(-1)^{p+1}\Big( 
\psi(g_2,\ldots,g_p;m)-\psi(g_1g_2,\ldots,g_p,g;m)\pm \cdots \\
&\ \ \ +(-1)^{p+1}\psi(g_1,\ldots,g_p;g)\Big)
\Big).
\end{align*}	
Adding the two expressions, all terms except for $\psi(g_1,\ldots,g_p;m)$ cancel. 
\end{proof}

Note that since $\kappa_{p+1}\circ h_p\neq \kappa_p$, in general, the maps $h_p$ need not preserve the 
foliation $\F$, and hence the homotopy operator $\h$ and the projection $\sp$ do not usually 
commute with the differential $\d$. 
\begin{remark}
	The homotopy operator $\h$ and the projection $\sp$ restrict to the normalized subcomplex $\wt{D}^{\bullet,\bullet}(G,V)$. On this subcomplex , they have the additional properties
	\begin{equation}\label{eq:sideconditions} \h\circ \h=0,\ \ \ \ \sp\circ \mathsf{h}=0;\end{equation}
	this follows because $h_p$ coincides on the range of $h_{p-1}$ (or of $u\colon M\hra G_0$, in case $p=0$) with the degeneracy map $\epsilon_p$.
\end{remark}

\subsubsection{Van Est map}
The Perturbation Lemma \ref{lem:perturbed} gives a new projection $\sp'=\sp\circ (1+\d \h)^{-1}$,  which is a cochain map for the total differential $\d+\delta$, with $\sp'\circ \si=\id$. Thus, $\si$ is a homotopy equivalence, with $\sp'$ a homotopy inverse.  We obtain a cochain map 
\begin{equation}\label{eq:veprime}
\on{VE}_G=	\sp \circ (1+\d \h)^{-1}\circ \sj\colon \sC^\bullet(\G,V)\to 
\sC^\bullet(A,V).
\end{equation}
For a more explicit description of this map, recall the commuting $G$-actions \eqref{eq:actions} on $B_pG\times_M Q$, for a $G$-manifold $Q$. 
These actions have generating vector fields $\xi^{(i)},\ \xi\in \Gamma(A),\ i=1,\ldots,p$. In the case  
of $Q=V$, the $\xi^{(i)}$ are linear with respect to the vector bundle structure on $B_pG\times_M V\to B_pG$. They hence define covariant derivatives 
\[ \nabla_\xi^{(i)}\colon \Gamma(B_pG\times_M V)\to \Gamma(B_pG\times_M V).\]

\begin{theorem}\cite{lib:ve}\label{th:vedifferentiation}
The map $\VE_G$ is given by the formula, 
\begin{equation}\label{eq:weinsteinxu}
\VE_G(\sigma)(\xi_1,\ldots,\xi_p)=
\sum_{s\in \mf{S}_p} \on{sign}(s)\ 
\nabla^{(1)}_{\xi_{s(1)}}\cdots \nabla^{(p)}_{\xi_{s(p)}} (\sigma)\big|_M\end{equation}
for $\sigma\in\sC^p(G,V)=\Gamma(B_pG\times_M V)
$ and $\xi_1,\ldots,\xi_p\in\Gamma(A)$. Here the sum is over the permutation group $\mf{S}_p$, and $M$ is regarded as a submanifold of $B_pG$
consisting of constant $p$-arrows.
\end{theorem}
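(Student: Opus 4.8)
The plan is to follow the template of the proof of Theorem \ref{th:K}, specialized to the augmentation and homotopy operators of the groupoid van Est double complex. By the zig-zag formula \eqref{eq:xy}, on $\sC^p(G,V)$ one has $\VE_G=(-1)^p\,\sp\circ(\d\h)^p\circ\sj$, so the task is to evaluate this single composition explicitly. I would compute it stage by stage, starting from $\sj\sigma=\kappa_p^*\sigma$ and alternately applying the homotopy $\h$ (a signed pullback under the maps $h_j$) and the vertical differential $\d=(-1)^\bullet\d_{CE}$, reading off at each stage which covariant derivative is produced.

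First I would verify the initial steps. The map $h_{p-1}$ sets the principal fibre variable equal to the last arrow, so that $\h\sj\sigma$ becomes the section $(g_1,\ldots,g_{p-1};g)\mapsto\sigma(g_1,\ldots,g_{p-1},g)$ on $E_{p-1}G$; since the $\kappa_{p-1}$-foliation direction is exactly this last arrow, applying $\d_{CE}$ produces the covariant derivative $\nabla^{(p)}$ of the $p$-th action \eqref{eq:actions}. Iterating, the next map $h_{p-2}$ sets the new principal variable to a base point $m=\sz(\cdot)$ that now depends on the preceding arrows through the source map, so that differentiating along the foliation over $E_{p-k}G$ produces the \emph{diagonal} covariant derivative $\tilde{\nabla}^{(k)}=\nabla^{(k)}+\nabla^{(k+1)}+\cdots+\nabla^{(p)}$, obtained from the actions \eqref{eq:actions} labelled $k,\ldots,p$ by passing to the diagonal, the groupoid counterpart of \eqref{eq:tildeaction}. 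After the final $\sp=u^*$ (restriction to the constant arrows $M$), this yields $\VE_G(\sigma)=\tilde{\d}^{(1)}\cdots\tilde{\d}^{(p)}\sigma\big|_M$, where $\tilde{\d}^{(i)}$ is the Chevalley--Eilenberg differential whose derivative part is $\tilde{\nabla}^{(i)}$, and $\tilde{\d}^{(p)}=\nabla^{(p)}$.

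The remaining step --- which I expect to be the main obstacle --- is to show that this product of diagonal differentials collapses into the antisymmetrized formula \eqref{eq:weinsteinxu} involving only the single-action derivatives $\nabla^{(1)},\ldots,\nabla^{(p)}$. The new feature, absent in the Lie-group case where everything reduced to ordinary derivatives of functions, is that each $\tilde{\d}^{(i)}$ carries a Lie-algebroid bracket term. The crucial inputs are that each action \eqref{eq:actions} is a genuine, hence flat, $A$-action, so $[\nabla^{(i)}_\xi,\nabla^{(i)}_\eta]=\nabla^{(i)}_{[\xi,\eta]}$, and that distinct actions commute, $[\nabla^{(i)}_\xi,\nabla^{(j)}_\eta]=0$ for $i\neq j$. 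Expanding $\tilde{\nabla}^{(i)}=\sum_{j\ge i}\nabla^{(j)}$ in each factor and evaluating the resulting $p$-form on $\xi_1,\ldots,\xi_p$ (which produces the alternating sum over $\mf{S}_p$ from the Chevalley--Eilenberg differentials), the monomials using each of the $p$ actions exactly once force the diagonal choice $\nabla^{(1)}\cdots\nabla^{(p)}$ and reproduce the permutation sum in \eqref{eq:weinsteinxu}, while every monomial repeating an action reorganizes, via the flatness identity, into the bracket terms of the $\tilde{\d}^{(i)}$ and cancels. I would check this cancellation first for $p=2$, where it is the single identity $[\nabla^{(2)}_\xi,\nabla^{(2)}_\eta]\sigma-\nabla^{(2)}_{[\xi,\eta]}\sigma=0$, and then organize the general case by induction on $p$, tracking how a factor $\nabla^{(j)}$ with $j>i$ occurring in the $i$-th slot is paired against the Chevalley--Eilenberg bracket contributions.
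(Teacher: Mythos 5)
Your opening steps are fine: on $\sC^p(G,V)$ the zig-zag gives $\VE_G=(-1)^p\,\sp\circ(\d\h)^p\circ\sj$, the first homotopy yields $(g_1,\ldots,g_{p-1};g)\mapsto\sigma(g_1,\ldots,g_{p-1},g)$, and the first vertical differential produces $\nabla^{(p)}$. The gap is in the pivot of your argument, the identity $\VE_G(\sigma)=\tilde{\d}^{(1)}\cdots\tilde{\d}^{(p)}\sigma\big|_M$: the ``diagonal Chevalley--Eilenberg differentials'' $\tilde{\d}^{(i)}$ do not exist in the groupoid setting. In the Lie group case the diagonal actions \eqref{eq:tildeaction} are honest $G$-actions, because all anchors are trivial and the conjugations $a^{-1}g_ja$ make sense; this is what makes \eqref{eq:vek} meaningful, and your argument (including the $p=2$ flatness cancellation) would indeed rederive the Weinstein--Xu formula from \eqref{eq:vek} \emph{for groups}. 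For a groupoid, however, the $p+1$ commuting actions \eqref{eq:actions} have \emph{different} anchor maps $(g_1,\ldots,g_p;x)\mapsto m_i$, so the diagonal of the actions labelled $k,\ldots,p$ is not a groupoid action (the conjugation $a^{-1}g_ja$ would require $m_{j-1}=m_j$), and infinitesimally $\tilde{\nabla}^{(k)}_\xi=\sum_{j\ge k}\nabla^{(j)}_\xi$ is not a Lie algebroid representation: writing $\Phi_j(g_1,\ldots,g_p)=m_j$, one has $\tilde{\nabla}^{(k)}_{f\xi}=\sum_{j\ge k}(f\circ\Phi_j)\,\nabla^{(j)}_\xi$ for $f\in C^\infty(M)$, which is not of the form $(f\circ\Phi)\,\tilde{\nabla}^{(k)}_\xi$ for any single map $\Phi$. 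Consequently the Chevalley--Eilenberg formula built from $\tilde{\nabla}^{(k)}$ --- whose bracket terms are exactly what your cancellation argument feeds on --- is not tensorial, and $\tilde{\d}^{(k)}\alpha$ is not a section of $B_pG\times_M(V\otimes\wedge^{\bullet+1}A^*)$. The sums $\tilde{\nabla}^{(k)}_\xi$ are perfectly good first-order operators, but ``the CE differential whose derivative part is $\tilde{\nabla}^{(k)}$'' is undefined; hence neither your intermediate formula nor the flatness-based collapsing step (which moreover is only sketched for general $p$) can be carried out as stated.

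This is precisely the point where the paper's proof departs from the template of Theorem \ref{th:K}. It writes $\VE_G=u^*\circ\d_{CE}\circ h_0^*\circ\cdots\circ\d_{CE}\circ h_{p-1}^*\circ\kappa_p^*$, applies $\iota_{\xi_p}\cdots\iota_{\xi_1}$, and moves the contractions to the right: each commutator $[\iota_\xi,\d_{CE}]=\L_\xi$ is a Lie derivative on the foliated complex, which by Lemma \ref{lem:helps} passes through the pullbacks $h_j^*$ while converting into the sums $\wh{\nabla}^{(i)}_\xi=\nabla^{(i)}_\xi+\cdots+\nabla^{(1)}_\xi$ (operators only, never differentials), and is killed by $\L_\xi\circ\kappa_p^*=0$; contractions that reach $\kappa_p^*\sigma$ vanish because $\sigma$ is a $0$-cochain. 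In this bookkeeping no bracket or curvature term ever appears, and the resulting antisymmetrized sum of products of $\wh{\nabla}$'s is reduced to \eqref{eq:weinsteinxu} by a purely combinatorial pairing of permutations under a transposition --- no flatness is used anywhere. If you try to repair your route by formulating the stage-$k$ claim as an identity for the honest foliated CE differential over $E_kG$ against $\iota_\xi$ and $\L_\xi$, the identities you need are exactly those of Lemma \ref{lem:helps}; in other words, the repair reconstructs the paper's proof rather than giving an alternative to it.
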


\begin{remark}
Equation \eqref{eq:weinsteinxu} is Weinstein-Xu's formula \cite{wei:ext} for the van Est map $\VE_G$. 
To be precise, \cite{wei:ext} only treated the case of trivial coefficients, and exclusively worked with the normalized subcomplex $\wt{\sC}(G)$. They proved by direct  computation that this expression defines a cochain map, and furthermore that it intertwines the cup product on 
groupoid cochains with the wedge product on Lie algebroid cochains. 
The latter fact only holds true on the normalized subcomplex. In \cite{lib:ve}, it was explained by additional properties of the homotopy operator on the normalized  sub-double complex, such as \eqref{eq:sideconditions}.  
\end{remark}

We include a proof of Theorem \ref{th:vedifferentiation} in the Appendix.  (It is a slightly simplified version of the argument in \cite{lib:ve}.)

\subsection{Integration}\label{subsec:integrationG}
We next discuss the  integration from Lie algebroid cochains to Lie groupoid cochains. We will work with the localized complex $\sC^\bullet(G,V)_M$ defined in terms of germs of sections along $M\subset B_pG$; here $G$ could also be only a local Lie groupoid. For  convenience, we will typically omit explicit emphasis of `germs' and `local'. If $G$ is a Lie groupoid which happens to be \emph{globally} contractible to $M$ along its $\tz$-fibers, one may work with the complex $\sC(G,V)$. 
%We will first describe the integration map from \cite{cab:loc}, and then show how to obtain the same map from the Perturbation Lemma. 
 
\subsubsection{Differential form picture of double complex} \label{subsec:diffform}
 Just as in the case that $G$ is a Lie group,  the discussion of integration is more convenient using an interpretation in term of differential forms. 
 Let  $(\Omega_\F(G,\tz^*V),\d_{Rh})$ be the de Rham complex of foliated (leafwise) $\tz^*V$-valued forms 
 on $G$. Restriction  to $M\subset G$ takes such a form to a section of $V\otimes \wedge A^*$, and induced an isomorphism of differential complexes, 
  \begin{equation}\label{eq:dcedrh}(\Omega^\bullet_\F(G,\tz^*V)^L,\d_{Rh})\cong
   (\sC^\bullet(A,V),\d_{CE}) .\end{equation}
Observe furthermore that 
 \begin{align*}\pi_p^*A&=T_\F E_pG=B_pG\times_M T_\F G,\\
  \pi_p^*V&=\kappa_p^*(B_pG\times_M V)=B_pG\times_M \tz^*V.\end{align*}
 We may hence regard the elements of $\sD^{p,q}(G,V)$ 
  as  maps 
 \[ \beta\colon B_pG\to \Omega_\F^q(G,\tz^* V),\ (g_1,\ldots,g_p)\mapsto \beta(g_1,\ldots,g_p)\]
 such that $\beta(g_1,\ldots,g_p)\in \Omega^q(\tz^{-1}(m))\otimes V_m$ for $m=\sz(g_p)$, and smoothly depending on $(g_1,\ldots,g_p)$.  Similarly, 
 $\sD^{p,q}(G,V)_M$ is interpreted as germs along $M\subset B_pG$ of such maps. 
 
 In this picture, the vertical differential is $\d=(-1)^p \d_{Rh}$ 
 (cf. \eqref{eq:dbeta}), while the horizontal differential is described similar to \eqref{eq:deltabeta}. The augmentation map $\si$ is the inclusion of \eqref{eq:dcedrh},  
 \[ \si\colon \Omega^q_\F(G,\tz^*V)^L\hra \sD^{0,q}(G,V)=\Omega^q_\F(G,\tz^*V)\]
 	while 
 	$\sj$ is the inclusion of $\Gamma(B_pG\times_M V)$ into the space of 
 	maps $\beta\colon B_pG\to \Omega^0_\F(G,\tz^*V)$ such that $\beta(g_1,\ldots,g_p)\in C^\infty(\tz^{-1}(m))\otimes V_m$ is constant on $\tz^{-1}(m)$, for any given $(g_1,\ldots,g_p)$ with $m=\sz(g_p)$.

 \subsubsection{The integration map $R_G$}
 A \emph{tubular structure} for a (local) Lie groupoid $ G\rra M$ is a tubular neighborhood embedding
 \[ A\cong \ker(T\tz)|_M\to  G\] 
 taking the fibers of 
 $A\to M$ to the $\tz$-fibers, and with differential along $M$ the identity map of $A$. 
 %(The paper \cite{cab:loc} used a definition of the Lie algebroid in terms of right-invariant vector fields; this motivated  a different convention where tubular structures are tangent to $\sz$-fibers.) 
 The tubular structure transports the scalar multiplication in $A$ to a retraction along $\tz$-fibers 
 \begin{equation}\label{eq:lambdaretraction}
  \lambda\colon [0,1]\times  G\to  G,\ (t,g)\mapsto \lambda_t(g),
  \end{equation}
 or more precisely the germ along $[0,1]\times M$ of such a map.  Here 
  $\lambda_0=u\circ \tz$, where $u\colon M\to G$ is 
  the inclusion of units. 
  The retraction determines a homotopy operator 
\[ T\colon \Omega^q_\F(G,\tz^*V)_M\to \Omega^{q-1}_\F(G,\tz^*V)_M\]
given by pullback under $\lambda$ followed by integration over $[0,1]$. 
This has the properties $T\circ T=0$ and 
\[ T\beta|_M=0.\]
Similar to 
\eqref{eq:kbeta}, it defines a vertical homotopy operator $\k$ on the double complex, where
\[ (\k\beta)(g_1,\ldots,g_p)=(-1)^p T(\beta(g_1,\ldots,g_p))\]
That is, $[\k,\d]=1-\sj\circ \sq$ where, for $\beta$ of bidegree $(p,0)$, 
\[ (\sq\beta)(g_1,\ldots,g_p)=u^* \beta(g_1,\ldots,g_p)\] 
(the restriction of $\beta(g_1,\ldots,g_p)\in \Gamma(\tz^* V)$ to the units). The properties of $T$ show that 
\[ \k\circ \k=0,\ \ \sq\circ \k=0.\]
\smallskip

By the Perturbation Lemma \ref{lem:perturbed}, we obtain a cochain map 
\begin{equation} R_G=\sq\circ (1+\delta\k)^{-1}\circ \si\colon \sC^\bullet(A,V)\to \sC^\bullet(G,V)_M. \end{equation}
Note again that on elements of degree $p$, the map $R_{G}$ is given by a zig-zag $(-1)^p \sq\circ (\delta \k)^p\circ \si$. 
\begin{remark}\label{rem:normalized}
The vertical homotopy  $\k$ restricts to the normalized sub-double complex $\wt\sD(G,V)_M$. Hence, $R_G$ takes values in the normalized subcomplex $\wt{\sC}(G,V)_M$.
\end{remark}

\begin{proposition}\label{prop:rightinverse}
The integration map $R_G$ is right inverse to the van Est differentiation $\VE_G$:
\[ \VE_G\circ R_G=\id_{\sC(A,V)}.\]
\end{proposition}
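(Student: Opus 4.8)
The plan is to invoke the general algebraic machinery established in Section \ref{sec:perturbationlemma}, specifically the Zig-zag back-and-forth Lemma \ref{lem:backandforth}. Recall that $\VE_G = \sp\circ(1+\d\h)^{-1}\circ\sj$ is the zig-zag \eqref{eq:yx} associated to the horizontal homotopy $\h$, while $R_G = \sq\circ(1+\delta\k)^{-1}\circ\si$ is the zig-zag \eqref{eq:xy} associated to the vertical homotopy $\k$. Lemma \ref{lem:backandforth} asserts precisely that the composite $\VE_G\circ R_G$ is the identity on $\sC^\bullet(A,V)$, \emph{provided} that the two compatibility conditions \eqref{eq:hk0}, namely $\h\circ\k=0$ and $\sp\circ\k=0$, are satisfied. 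Thus the entire proof reduces to verifying these two identities for the specific operators $\h,\sp,\k$ constructed in this section.

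First I would recall the explicit structure of the operators. The vertical homotopy $\k$ is built from the de Rham homotopy operator $T$ associated to the retraction $\lambda$ along the $\tz$-fibers, via $(\k\beta)(g_1,\ldots,g_p)=(-1)^p T(\beta(g_1,\ldots,g_p))$; its key property is that $T\beta|_M=0$, so any form in the range of $T$ restricts to zero on the unit submanifold $M\subset G$. The horizontal homotopy is $\h\psi=(-1)^p h_{p-1}^*\psi$, where $h_{p-1}$ appends the last group element as a new arrow and sends the fiber coordinate to the unit $m=\sz(g)$; and $\sp=u^*$ is restriction to $M\subset E_0G=G$. Both $\h$ and $\sp$ therefore act by evaluating (pulling back) along the unit section in the $G$-direction.

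The central observation, which I would establish as the workhorse of the argument, is that in the differential-form picture both $\h$ and $\sp$ annihilate any element $\beta$ with the property that $\beta(g_1,\ldots,g_p)|_M=0$ for all arrows---that is, any form whose restriction to the unit submanifold of the relevant $\tz$-fiber vanishes. This is exactly analogous to the Lie-group case treated in Proposition \ref{prop:rightinversegk}: the pullback $h_{p-1}^*$ evaluates the $\tz^*V$-valued foliated form at the unit point $m$ of the fiber, so it depends only on the restriction $\beta|_M$; and $\sp=u^*$ is literally that restriction. Since $T$ was constructed so that $T\gamma|_M=0$ for every leafwise form $\gamma$, every element in the range of $\k$ restricts to zero along $M$. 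Combining these two facts yields $\h\circ\k=0$ and $\sp\circ\k=0$ immediately, completing the verification of \eqref{eq:hk0} and hence, by Lemma \ref{lem:backandforth}, the proposition.

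The step I expect to be the main obstacle is making precise the claim that $\h$ and $\sp$ factor through the restriction-to-units map $\beta\mapsto\beta|_M$. This is intuitively clear from the formulas, but it requires care with the identifications \eqref{eq:dlacomplex} and \eqref{eq:kappapi} that encode how $\pi_p^*V\cong B_pG\times_M\tz^*V$ and how the foliated Lie-algebroid structure interacts with the map $h_{p-1}$. In particular one must check that $h_{p-1}$, which does \emph{not} preserve the foliation $\F$ (as noted after the differentiation lemma), nonetheless pulls back the leafwise form data through its value at the single unit point $m=\sz(g)$, so that $\h\psi$ depends only on $\psi|_M$. Once this factorization is in hand, the vanishing results follow formally from $T\beta|_M=0$ exactly as in the group case, and no further computation is needed.
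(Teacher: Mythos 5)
Your proposal is correct and takes essentially the same approach as the paper's proof: reduce to Lemma \ref{lem:backandforth} by verifying $\h\circ\k=0$ and $\sp\circ\k=0$, which follow because every element in the range of $\k$ corresponds to a section vanishing along $B_pG\times_M M\subset E_pG$ (since $T\beta|_M=0$), while $\h=(-1)^p h_{p-1}^*$ and $\sp=u^*$ only see values along that unit locus. (One inconsequential slip: you swapped the labels --- $\VE_G$ is the zig-zag \eqref{eq:xy} and $R_G$ is \eqref{eq:yx} --- but your reading of what Lemma \ref{lem:backandforth} asserts is correct.)
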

\begin{proof}
Let $\psi\in \sD^{p,q}(G,V)$ with the corresponding map $\beta\colon B_pG\to \Omega^q_\F(G,\tz^* V)$. By the properties of $T$,
\[ (\k\beta)(g_1,\ldots,g_p)|_M=0.\]
This means that the section $\k\psi
\in \Gamma(\pi_p^*(V\otimes \wedge^{q-1} A^*))$ corresponding to $\k\beta$ vanishes along 
$B_pG\times_M M\subset E_pG=B_pG\times_M G$. But then $\h\k\psi=(-1)^p h_{p-1}^*\k\psi=0$. 
This shows $\h\circ \k=0$; similarly we obtain $\sp\circ \k=0$. Now use
Lemma \ref{lem:backandforth}.
\end{proof}

 \subsubsection{A formula for $R_G$}
We will now show that the van Est integration map $R_G$ coincides with the map defined in 
\cite{cab:loc}. Define a a map $[0,1]^p\times B_p G\to G$ (more precisely, a germ along 
$[0,1]^p\times M$ of such a map) 
by the formula:
 \begin{equation}\label{eq:gammaformula}
  \gamma^{(p)}_{t_1,\ldots,t_p}(g_1,\ldots,g_p)=\lambda_{t_1}\Big(g_1\,\lambda_{t_2} \big(g_2\cdots 
 \lambda_{t_p}(g_p)\cdots\big)\Big).
 \end{equation}
 For fixed $(g_1,\ldots,g_p)$ (close to $M$), this is a smooth map from the unit cube 
 $[0,1]^p$ into the $\tz$-fiber of  $m_0=\tz(g_1)$. 
 \begin{theorem}\label{th:thesame}
 The van Est integration map $R_G=\sq\circ (1+\delta\k)^{-1}\circ \si$ is given on 
 degree $p$ elements $\alpha\in \sC^p(A,V)$ by the formula 
  \begin{equation}\label{eq:cabdef} R_{G}(\alpha)(g_1,\ldots,g_p)=\int_{[0,1]^p}(\gamma^{(p)}(g_1,\ldots,g_p))^*(\alpha^L|_{\tz^{-1}(m_0)}),
  \end{equation}
 \end{theorem}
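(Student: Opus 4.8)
The plan is to transcribe, in the differential-form picture of Section~\ref{subsec:diffform}, the zig-zag computation already carried out for Lie groups (the proof of~\eqref{eq:veintk}); the two arguments are formally identical once $G/K$ is replaced by the $\tz$-fibers of $G$, the base point $eK$ by the units $u(M)$, and $L(a)^*$ by pullback along left multiplication between $\tz$-fibers. Recall that on degree $p$ elements $R_G=(-1)^p\,\sq\circ(\delta\k)^p\circ\si$, and that under~\eqref{eq:dcedrh} the augmentation $\si\alpha$ is the left-invariant leafwise form $\alpha^L\in\sD^{0,p}(G,V)_M$. In the picture where an element of $\sD^{p,q}(G,V)_M$ is a map $\beta\colon B_pG\to\Omega^q_\F(G,\tz^*V)$ with $\beta(g_1,\ldots,g_p)$ supported on $\tz^{-1}(m_p)$, the vertical homotopy is $(\k\beta)(g_1,\ldots,g_p)=(-1)^pT(\beta(g_1,\ldots,g_p))$, and the last face contribution of $\delta$ is the pullback $L(g_i)^*$ along left multiplication $L(g_i)\colon\tz^{-1}(m_i)\to\tz^{-1}(m_{i-1})$.

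First I would run the opening steps of the zig-zag: $\k\si\alpha=(-1)^pT\alpha^L$, and then $(\delta\k\si\alpha)(g_1)$ is, up to sign, the difference $T\alpha^L|_{\tz^{-1}(m_1)}-L(g_1)^*\big(T\alpha^L|_{\tz^{-1}(m_0)}\big)$. The key observation, exactly as in the group case, is that the two properties $T\circ T=0$ and $u^*\circ T=0$ (recorded just before the definition of $\k$) guarantee that every term beginning with $T$ is annihilated by the next application of $\k$ (respectively by the final $\sq$). Hence at each stage only the summand carrying the leading factor $L(g_i)^*$ survives. Iterating $\delta\k$ a total of $p$ times and then applying $\sq=u^*$ yields
\[
R_G(\alpha)(g_1,\ldots,g_p)=u^*\circ L(g_p)^*\circ T\circ\cdots\circ L(g_1)^*\circ T\,\big(\alpha^L|_{\tz^{-1}(m_0)}\big),
\]
the sign $(-1)^p$ being absorbed by the alternating signs of the $L(g_i)^*\circ T$ contributions, precisely as in the derivation of~\eqref{eq:veintk}.

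To pass to~\eqref{eq:cabdef} I would unfold each operator $T$ as pullback under the retraction~\eqref{eq:lambdaretraction} followed by integration over $[0,1]$, using a separate variable $t_i$ and a separate copy $\lambda^{(i)}$ for the $i$-th factor, so that the above becomes $\int_{[0,1]^p}u^*\,L(g_p)^*(\lambda^{(p)})^*\cdots L(g_1)^*(\lambda^{(1)})^*\big(\alpha^L|_{\tz^{-1}(m_0)}\big)$. Taking adjoints, the iterated pullback is $\big(\lambda^{(1)}\circ L(g_1)\circ\cdots\circ\lambda^{(p)}\circ L(g_p)\circ u\big)^*$, and this composite of maps is exactly $\gamma^{(p)}(g_1,\ldots,g_p)$ of~\eqref{eq:gammaformula}; the recursion is started by the identity $L(g_p)(u(m_p))=g_p\,u(m_p)=g_p$. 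This gives~\eqref{eq:cabdef}, with the orientation of $[0,1]^p$ fixed by $\d t_p\wedge\cdots\wedge\d t_1$.

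The step I expect to require the most care is the coefficient bookkeeping. Because the foliated forms take values in $\tz^*V$ while $\pi_p^*V$ is naturally $\sz^*V$, the pullbacks $L(g_i)^*$ must be understood in the representation-twisted sense dictated by~\eqref{eq:kappapi} (equivalently, the left-invariance of $\alpha^L$ must be used to transport coefficients across fibers), and one must verify at each stage that $\delta\k$ carries a form on $\tz^{-1}(m_i)$ to one on $\tz^{-1}(m_{i+1})$. For trivial coefficients $V=M\times\R$ this bookkeeping is vacuous and the identification with the Cabrera--Marcut--Salazar formula is transparent; the general case differs only by tracking the $G$-action on $V$, exactly as it enters the last face map of $\delta$.
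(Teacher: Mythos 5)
Your proposal is correct and takes essentially the same route as the paper: the paper's proof of Theorem \ref{th:thesame} is the same zig-zag computation, only phrased with simplicial face maps, where Lemma \ref{lem:usuallynot} (commutation of $\partial_i^*$ with $\k$ and $\sq$ for all but the top face) is exactly your observation that the re-indexing terms of $\delta$ retain a leading $T$ and are killed by $T\circ T=0$ and $u^*\circ T=0$, so that only the $L(g_i)^*$-term survives at each stage. Its Lemma \ref{lem:gammalemma} is precisely your identification of the surviving composite $\lambda^{(1)}\circ L(g_1)\circ\cdots\circ\lambda^{(p)}\circ L(g_p)\circ u$ with $\gamma^{(p)}$, so the two arguments coincide step for step.
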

 Here $\alpha^L\in \Omega^p_\F(G,\tz^*V)_M$ is the left-invariant foliated form defined by $\alpha\in \sC^p(A,V)$. We think of  $\alpha^L$ as a family of $V_m$-valued forms on the fibers $\tz^{-1}(m)$; for fixed $(g_1,\ldots,g_p)$ the map $\gamma^{(p)}(g_1,\ldots,g_p)$ takes values in 
 one such fiber, hence the pull-back is an ordinary form on $[0,1]^p$. 
 
 \begin{remark}
  In \cite{cab:loc}, it was shown by direct calculation that the right hand side of \eqref{eq:cabdef} is a cochain map, which is a right inverse to $\VE_G$ at the level of cochains.
 \end{remark}

\subsubsection{Proof of Theorem \ref{th:thesame}}
The proof will require some preliminary results. Observe first the following alternative description
of the maps $\gamma^{(p)}$. Denote by $\lambda^{(p)}_{t_p}$ the map $E_{p-1}G\to E_{p-1}G$
given by $(g_1,\ldots,g_{p-1},g)\mapsto (g_1,\ldots,g_{p-1},\lambda_{t_p}(g))$. 

\begin{lemma}\label{lem:gammalemma}
	The map \eqref{eq:gammaformula} is a composition
	\[ \gamma^{(p)}_{t_1,\ldots,t_p}=
	\lambda^{(1)}_{t_1}\circ \partial_1\circ \lambda^{(2)}_{t_2}\circ \partial_2\circ \cdots \circ \lambda^{(p)}_{t_p}\circ \partial_p\circ \nu_p\colon B_p\G\to G.\]
\end{lemma}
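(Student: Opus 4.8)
The plan is to verify the identity by a direct, pointwise unwinding of the composition from right to left, applied to a fixed $p$-arrow $(g_1,\ldots,g_p)\in B_pG$ (taken close to $M$, so that all the germs involved are defined). Every map in the composition acts on the arrow-entries only by combining the last arrow with the $G$-factor, so the entire computation reduces to tracking the single $G$-valued last coordinate. I would organize this as a downward induction on the index $i$ from $p$ to $1$, introducing the auxiliary elements $w_{p+1}=\sz(g_p)$ (the unit at the final base point) and $w_i=\lambda_{t_i}(g_i\,w_{i+1})$ for $i=p,p-1,\ldots,1$, so that $w_1$ is exactly the right-hand side $\gamma^{(p)}_{t_1,\ldots,t_p}(g_1,\ldots,g_p)$ of \eqref{eq:gammaformula}.

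The inductive claim I would prove is that
\[ \big(\lambda^{(i)}_{t_i}\circ \partial_i\circ \cdots \circ \lambda^{(p)}_{t_p}\circ \partial_p\circ \nu_p\big)(g_1,\ldots,g_p)=(g_1,\ldots,g_{i-1};w_i)\in E_{i-1}G \]
for every $1\le i\le p$. In the base case $i=p$, applying $\nu_p$ gives $(g_1,\ldots,g_p;\sz(g_p))$; the top face map $\partial_p$ then forms the groupoid product $g_p\cdot\sz(g_p)=g_p$ by the right-unit property, yielding $(g_1,\ldots,g_{p-1};g_p)$, and $\lambda^{(p)}_{t_p}$ sends the last coordinate to $\lambda_{t_p}(g_p)=w_p$. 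For the inductive step, starting from $(g_1,\ldots,g_i;w_{i+1})$ the top face map $\partial_i$ produces $(g_1,\ldots,g_{i-1};g_i\,w_{i+1})$ and then $\lambda^{(i)}_{t_i}$ gives $(g_1,\ldots,g_{i-1};w_i)$, which is the claim for $i$. Evaluating at $i=1$ lands in $E_0G=G$ with value $w_1=\gamma^{(p)}_{t_1,\ldots,t_p}(g_1,\ldots,g_p)$, which is the assertion of the lemma.

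The one point requiring genuine care — and the only place where anything could go wrong — is the \emph{well-definedness} of each intermediate expression, i.e.\ that the groupoid products and the fiber products $E_{i-1}G=B_{i-1}G\times_M G$ actually make sense at every stage. This is where I would invoke the defining property of the tubular structure, namely that $\lambda_t$ is a retraction \emph{along the $\tz$-fibers}, so that $\tz\circ\lambda_t=\tz$. Combined with $\tz(g\,h)=\tz(g)$ for composable $g,h$ and the composability relations $\sz(g_i)=\tz(g_{i+1})$ built into $B_pG$, this gives $\tz(w_{i+1})=\tz(g_{i+1})=\sz(g_i)$, so that the product $g_i\,w_{i+1}$ is defined and $(g_1,\ldots,g_{i-1};w_i)$ is a legitimate element of $E_{i-1}G$ (and one checks $\tz(w_i)=\tz(g_i)=\sz(g_{i-1})$ for the remaining fiber-product condition). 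Once this bookkeeping is in place the lemma is immediate: there is no analytic or homological difficulty here, only the verification that the composability conditions propagate correctly through the alternating face and retraction maps.
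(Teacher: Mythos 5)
Your proof is correct and follows essentially the same route as the paper's: a right-to-left unwinding of the composition, which the paper displays as a chain of pointwise evaluations and you phrase as a downward induction on $i$. The extra bookkeeping you include (using $\tz\circ\lambda_t=\tz$ to verify composability and membership in $E_{i-1}G$ at each stage) is a sound elaboration of details the paper leaves implicit, not a different argument.
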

\begin{proof} 
By direct calculation, 
\begin{align*}
 (g_1,\ldots,g_p) & \stackrel{\nu_p}{\longmapsto}   \big(g_1,\ldots,g_p;\, \sz(g_p)\big)\\ 
 & \stackrel{\partial_p }{\longmapsto}  (g_1,\ldots,g_{p-1};g_p)\\
 & \stackrel{\lambda_{t_p}^{(p)} }{\longmapsto} \big(g_1,\ldots,g_{p-1};\lambda_{t_p}(g_p)\big)\\
 & \stackrel{\partial_{p-1} }{\longmapsto} \big(g_1,\ldots,g_{p-2};g_{p-1}\lambda_{t_p}(g_p)\big)\\
& \stackrel{\lambda_{t_{p-1}}^{(p-1)} }{\longmapsto}
\Big(g_1,\ldots,g_{p-2};\lambda_{t_{p-1}}\big(g_{p-1}\lambda_{t_p}(g_p)\big)\Big)\\
 &\cdots
 \end{align*}
eventually arriving at   \eqref{eq:gammaformula}. 
\end{proof}
We will also need:
\begin{lemma}\label{lem:usuallynot}
	 For $0\le i\le p$ (but usually not for $i=p+1$), we have that 
\begin{align*}
 \partial_i^*\circ \k=\k\circ \partial_i^* &\colon \sD^{p,q}(G,V)_M\to \sD^{p+1,q-1}(G,V)_M,\\
 \partial_i^*\circ \sq=\sq\circ \partial_i^* &\colon \sD^{p,0}(G,V)_M\to \sC^{p+1}(G,V)_M.
\end{align*}
\end{lemma}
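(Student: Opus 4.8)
The plan is to work in the differential-form description of $\sD^{\bullet,\bullet}(G,V)_M$ from Subsection~\ref{subsec:diffform}, in which an element of bidegree $(p,q)$ is a (germ of a) map $\beta\colon B_pG\to\Omega^q_\F(G,\tz^*V)$, with $\beta(g_1,\ldots,g_p)$ a form on the $\tz$-fibre through $m=\sz(g_p)$. The key point is that both $\k$ and $\sq$ are \emph{fibrewise} operators: $\k=(-1)^pT$ applies the de Rham homotopy $T$ (pullback under $\lambda_t$ followed by integration over $[0,1]$) to $\beta(g_1,\ldots,g_p)$ along each $\tz$-fibre, while $\sq=u^*$ restricts $\beta(g_1,\ldots,g_p)$ (a $\tz^*V$-valued function when $q=0$) to the unit. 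Neither operator touches the arrow-tuple $(g_1,\ldots,g_p)$, which is exactly the data the face maps act on.

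First I would record the effect of the face maps $\partial_i\colon E_{p+1}G\to E_pG$ on the last $G$-factor $g$. For $0\le i\le p$ the formula for $\partial_i$ fixes $g$, merely dropping $g_1$ (when $i=0$) or multiplying adjacent arrows $g_ig_{i+1}$; in particular $\pi_{p+1}=\pi_p\circ\partial_i$, so the fibre $\tz^{-1}(\sz(g_{p+1}))$ and the coefficient identification \eqref{eq:kappapi} are preserved with no action in the fibre direction. Hence, on forms, $\partial_i^*$ merely relabels the arrow-tuple and commutes with any operator built fibrewise from $\lambda_t$ or $u^*$. Writing out both sides of each identity then reduces it to the commutation of $\partial_i^*$ with $T$ (respectively with $u^*$), which is immediate; the only bookkeeping is the $(-1)^p$ sign carried by $\k$, which one tracks along the way.

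The remaining task is to explain why $i=p+1$ is excluded, and this is where the geometry of the tubular retraction enters. The last face map $\partial_{p+1}(g_1,\ldots,g_{p+1};g)=(g_1,\ldots,g_p;g_{p+1}g)$ acts on the fibre factor by left translation $g\mapsto g_{p+1}g$, carrying $\tz^{-1}(\sz(g_{p+1}))$ to $\tz^{-1}(\sz(g_p))$; thus $\partial_{p+1}^*$ involves the pullback $L(g_{p+1})^*$ between \emph{distinct} $\tz$-fibres. Since $\lambda$ comes from a tubular structure and is not left-invariant, $\lambda_t\circ L(g_{p+1})\neq L(g_{p+1})\circ\lambda_t$ in general, so $T$---and hence $\k$---does not commute with $L(g_{p+1})^*$, and the same translation obstructs $\sq=u^*$. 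I expect the only real care needed is the verification that for $0\le i\le p$ the pullback $\partial_i^*$ acts trivially in the fibre direction once the coefficient bundle has been identified through the groupoid representation via \eqref{eq:kappapi}, together with honest tracking of the $(-1)^p$ signs in $\k$.
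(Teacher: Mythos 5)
Your proposal is correct and is essentially the paper's own proof: the paper likewise reduces both identities to the geometric facts that for $0\le i\le p$ the face maps commute with the fibrewise retractions $\lambda_t^{(\cdot)}$ and with the sections $\nu_\cdot$ (because these $\partial_i$ do not move the last $G$-factor), while the last face map fails to do so since it translates the fibre coordinate and the tubular retraction $\lambda_t$ is not left-invariant. One caveat: if one really tracks the $(-1)^p$ signs as you propose, the first identity comes out as \emph{anti}commutation, since $\k$ carries $(-1)^p$ on horizontal degree $p$ but $(-1)^{p+1}$ on degree $p+1$ --- a discrepancy already present in the paper's statement and proof, and harmless in the applications, where the lemma is only invoked to show that compositions such as $\k\circ\partial_i^*\circ\k$ and $\sq\circ\partial_i^*\circ\k$ vanish.
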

\begin{proof}
The identities follow since  $\lambda_t^{(p)}\circ \partial_i=\partial_i\circ \lambda_t^{(p+1)}\colon E_pG\to E_{p-1}G$ 
for $0\le i < p$  (but usually not for $i=p$), and 
$\nu_{p-1}\circ \partial_i=\partial_i\circ \nu_{p}\colon B_pG\to E_{p-1}G$ for $0\le i<p$  (but usually not for $i=p$).
\end{proof}

\begin{proof}[Proof of Theorem \ref{th:thesame}]
Let $\alpha\in \sC^p(A,V)=\Gamma(V\otimes \wedge^p A^*)$. Then
\[ R_{G}(\alpha)=(-1)^p\, \sq\circ (\delta\circ \k)^p\ \si(\alpha).\]
In this expression, the leftmost $\delta$ is the map $\delta=\sum_{i=0}^p (-1)^i \partial_i^*\colon 
\sD^{p-1,0}(G,V)_M\to \sD^{p,0}(G,V)_M$. 
Using $\sq\circ \k=0$  and Lemma \ref{lem:usuallynot}, we have that 
 $\sq\circ \partial_i^*\circ \k=
\partial_i^*\circ \sq\circ \k=0$ for $i<p$. Hence the composition $\sq\circ \delta$ may be replaced with 
$(-1)^p \sq\circ \partial_p^*=(-1)^p \nu_p^*\circ\partial_p^*$, leading to
\[ R_{G}(\alpha)=\nu_p^* \circ \partial_p^*\circ \k \circ (\delta\circ \k)^{p-1}\ \si(\alpha).\]
If $p>1$,  consider the leftmost product $\k \circ \delta \circ \k\colon 
\sD^{p-1,2}(G,V)_M
%\sD^{p-2,1}(G,V)_M
\to \sD^{p-1,0}(G,V)_M$.  Using $\k\circ \k=0$ and Lemma \ref{lem:usuallynot} again,  
$\k\circ \partial_i^*\circ \k=\partial_i^*\circ \k\circ \k=0$ for $i<p-1$; hence we may replace 
this expression with 
$(-1)^{p-1}\k\circ \partial_{p-1}^*\circ \k$. Continuing in this way, we arrive at 
\[
R_{G}(\alpha)=(-1)^{p(p-1)/2}
\big(\nu_p^*\circ \partial_p^*\circ \k\circ \partial_{p-1}^*\circ \k\circ \cdots 
\circ \partial_1^*\circ \k\big)\,\si(\alpha).
\]
But $\k\colon \sD^{p,q}(G,V)_M\to \sD^{p,q-1}(G,V)_M$ 
is given by $(-1)^p$ times pull-back under the map $\lambda^{(p)}\colon [0,1]\times E_p\G\to E_p\G$, 
followed by integration over $[0,1]$. The signs for $\k$'s cancel the 
$(-1)^{p(p-1)/2}$, and the resulting expression reads as
\begin{align*}
 R_{G}(\alpha)&=\int_{[0,1]^p}
\big(\nu_p^*\circ \partial_p^*\circ (\lambda^{(p)})^*\circ \partial_{p-1}^*\circ (\lambda^{(p-1)})^*\circ \cdots 
\circ \partial_1^*\circ (\lambda^{(1)})^*\big)\,\si(\alpha)\\
&=\int_{[0,1]^p} (\gamma^{(p)})^*\alpha^L,\end{align*} 
where we used Lemma \ref{lem:gammalemma} and $\si(\alpha)=\alpha^L$.
\end{proof}

%\subsubsection{Remarks}We saw that the tubular structure for the groupoid $G$ determines a canonical family of smooth maps, $\gamma^{(p)}(g_1,\ldots,g_p)$, from the $p$-cube $[0,1]^p$ into $G$. This map descends to a continuous map from a $p$-simplex  $\Delta^p=[0,1]^p/\sim$ under the equivalence relation where$(t_1,\ldots,t_p)\sim (t_1',\ldots,t_p')$ whenever there exists $i$ such that $t_j=t_j'$ for $j\le i$, with $t_i=0$. However this 

\subsection{Example: the pair groupoid}
Let $\on{Pair}(M)\rra M$ be the pair groupoid of the manifold $M$, with associated Lie algebroid the tangent bundle $TM$.  Here $B_pG\cong M^{p+1}$, 
and $\sC^\bullet(\on{Pair}(M))_M=C^\infty(M^{\bullet+1})_M$ is the Alexander-Spanier complex, with differential 
\[ (\delta f)(m_0,\ldots,m_{p+1})=\sum_{i=0}^{p+1}(-1)^i 
f(m_0,\ldots,m_{i-1},m_{i+1},\ldots,m_{p+1}),\]
while $\sC^\bullet(TM)=\Omega^\bullet(M)$ is the usual de Rham complex.  
The Van Est differentiation is given on functions of the form $f=f_0\otimes \cdots \otimes f_p$ with $f_i\in C^\infty(M)$ by 
\[ \on{VE}(f_0\otimes \cdots \otimes f_p)=f_0\,\d f_1\wedge \cdots \wedge \d f_p.\]
For the integration, choose an affine connection on $M$. For
$m_0,m_1\in M$ sufficiently close, let 
 $\rho(m_0,m_1)\colon [0,1]\to M$ be the geodesic starting at $m_0$ and ending at 
 $m_1$.  Generalize to maps $\rho^{(p)}(m_0,\ldots,m_p)\colon [0,1]^p\to M$, given by 
 $\rho$ for $p=1$ and inductively by 
\[ \rho^{(p)}_{ t_1,\ldots,t_p }(m_0,\ldots ,m_p)=
\rho_{t_1}(m_0,\rho^{(p-1)}_{t_2,\ldots,t_p }(m_1,\ldots m_{p})).\]
The van Est integration $R_G$ takes $\alpha\in \Omega^p(M)$ to the (germ of a) function 
$(m_0,\ldots,m_p)\mapsto \int_{[0,1]^p} (\rho^{(p)}(m_0,\ldots,m_p))^*\alpha$.

\section{Van Est maps for Lie groupoid actions on manifolds}
\label{sec:actions}
In his paper \cite{cra:dif}, Crainic proved a general van Est theorem for (proper) groupoid actions on manifolds $Q$. We explain how to generalize the differentiation and integration maps  for cochains to this context. The construction of a horizontal homotopy operator for the double complex will require the additional data of a \emph{Haar distribution}.  

\subsection{Haar distributions}
By a (left-invariant) Haar distribution on a Lie groupoid $G\rra M$, we mean a family $\mu=\{\mu_m\}$ of distributions on the $\tz$-fibers 
\[ \mu_m\in \D'(\tz^{-1}(m)),\ \ \ m\in M,\]
such that: 
\begin{itemize}
\item The family depends smoothly on $m$, in the sense that for any compactly supported function $f\in C^\infty(G)$ the integral over $\tz$-fibers defines a smooth function 
\begin{equation}\label{eq:fiberintegral}
m\mapsto  \int_{a\in \tz^{-1}(m)} f(a)\,\mu_m(a).\end{equation}
%We will also use the notation $\int_\F f\mu$ for the fiber integral.
\item The family is left-invariant, in the sense that 
$(l_g)_*\mu_m=\mu_{g\cdot m}$ for all $g\in G$ with $\sz(g)=m$. 
\end{itemize}
The Haar distribution is called \emph{properly supported} if $\tz$ restricts to a proper map 
$\on{supp}(\mu)\to M$; in particular, this means that the individual distributions $\mu_m$ are compactly supported.  
It is called \emph{normalized} if furthermore $\int_{\tz^{-1}(m)} \mu_m=1$ for all $m$, and \emph{non-negative} if the integral \eqref{eq:fiberintegral} is non-negative for all $f\ge 0$.   A Haar distribution is called a \emph{Haar density} if it is smooth; by left-invariance, these are equivalent to smooth sections of the density bundle of $A=\on{Lie}(G)$. It is known \cite{cra:dif,cra:orb,tu:con} that if the Lie groupoid is \emph{proper}, in the sense that $(\tz,\sz)\colon G\to M\times M$ is a proper map, then $G$ admits a properly supported, non-negative, normalized Haar density. 

As shown by Crainic \cite[Proof of Proposition 1]{cra:dif}, a properly supported normalized Haar distribution $\mu$ for a proper groupoid 
$G\rra M$ defines a homotopy operator for the groupoid cochain complex  $\sC(G,V)$, for any $G$-representation $V$:
\begin{equation}\label{eq:muhomotopy}
 (\h\sigma)(g_1,\ldots,g_{p-1})=(-1)^p \int_{a\in \tz^{-1}(m)} a\cdot \sigma(g_1,\ldots,g_{p-1},a)\,\mu_m(a)\end{equation}
where $m=\sz(g_{p-1})$. (Actually, \cite{cra:dif} only considers Haar densities, but the calculation for distributions is exactly the same.) Thus $[\h,\delta]=1-\si\circ \sp$, where $\si$ is the inclusion of invariant sections $\Gamma(V)^G$, while $\sp$ takes a section $\tau\in \Gamma(V)$ to the invariant section obtained by averaging:
\begin{equation}\label{eq:muprojection}
 (\sp\tau)_m=\int_{a\in \tz^{-1}(m)}  a\cdot \tau(\sz(a))\ \mu_m(a).
\end{equation}

\begin{examples}
\begin{enumerate}
\item For a Lie group $G$, every Haar distribution is automatically smooth, and is obtained by left translation of an element of 
the density space of $T_eG=\g$. The groupoid 
$G\rra \pt$ is proper (and its Haar measure is properly supported) 
if and only if $G$ is compact.
\item The pair groupoid $\on{Pair}(M)=M\times M\rra M$ is proper. Its $\tz$-fibers are $\tz^{-1}(m)=\{m\}\times M \cong M$; under this identification, the left-action of elements $a=(m',m)$ on $\on{Pair}(M)$ corresponds to the \emph{trivial} diffeomorphism of $M$. Hence, any fixed $\nu\in \D'(M)$ defines a Haar distribution with $\mu_m=\nu$ independent of $m$; it is proper if $\nu$ has compact support, and normalized if $\int \nu=1$. 
In particular, we may take $\nu$ to be the  delta-distribution at any given base point 
$z\in M$. The resulting homotopy for the 
complex $\sC^\bullet(\on{Pair}(M))=C^\infty(M^{\bullet+1})$ is the standard one:
\[ (\h f)(m_0,\ldots,m_{p-1})=(-1)^p 
f(m_0,\ldots,m_{p-1},z).\]
\item Let $\ca{U}=\{U_i\}$ be an open cover of a manifold $M$,
and put $X=\sqcup_i U_i$. The associated \v{C}ech groupoid 
$X\times_M X\rra X$, with the groupoid structure induced from the pair groupoid 
$\on{Pair}(X)$, is proper. A locally finite partition of unity $\{\chi_i\}$ subordinate to the cover $\ca{U}$ defines a normalized properly supported Haar distribution: the $\tz$-fiber of $m\in 
U_i$ is a disjoint union of copies of $\{m\}$ (one for each $U_j$ containing $m$), and the Haar density on this discrete set is given by the sequence  $\{\chi_j(m)\}$. The invariant elements of $\sC^0(X\times_M X)=C^\infty(X)$ 
are pullback of functions on $M$, and the homotopy operator on the \v{C}ech complex 
$\sC(X\times_M X)$ is the standard one \cite[Chapter 2.8]{bo:di}, cf.~ Example \ref{ex:chechderham}. 
\end{enumerate}
\end{examples} 

Suppose that $G\rra M$ is a Lie groupoid, and that $Q$ is a $G$-manifold, with anchor map $\Phi\colon Q\to M$. The action is called \emph{proper} if the action groupoid $G\ltimes Q\rra Q$ is proper. 
The $\tz$-fiber of $x\in Q$ in the action groupoid is canonically identified with the $\tz$-fiber of 
$\Phi(x)\in M$ in the groupoid $G$; hence a Haar distribution $\mu$ for the action groupoid  amounts to a family of distributions 
\begin{equation} \label{eq:actonq}\mu_x\in \ca{D}'(\tz^{-1}(\Phi(x))),\ \ x\in Q\end{equation}
with the following invariance property: 
\[ \mu_{a.x}=(l_{a})_*\mu_x,\ \ \ x\in Q,\ a\in G.\] 
If the action is proper, there exists a properly supported normalized Haar distribution. 

\begin{examples}\label{ex:actionhaar}
\begin{enumerate}
\item\label{it:1} For the action of $G$ on $Q=G$ by left translation, we may take \eqref{eq:actonq} to be 
the collection of delta-distributions
\[ \mu_g=\delta_g \in \ca{D}'(\tz^{-1}(\tz(g)).\]
\item\label{it:2} Let $G$ be a Lie group, and consider the homogeneous space $Q=G/K$ where 
$K$ a compact subgroup. Let $\delta_K\in \D'(G)$ be the push-forward of the normalized Haar density  on $K$. Then the family of distributions $\mu_{gK}=(l_g)_*\delta_K$ defines a normalized, properly supported Haar distribution.
\end{enumerate}
\end{examples}

\subsection{The van Est double complex}\label{subsec:double2}
For the rest of this section, suppose that $G\rra M$ is a Lie groupoid acting on a manifold $Q$, with moment map 
$\Phi\colon Q\to M$ a \emph{submersion}. The fibers of $\Phi$ define a $G$-invariant foliation $\F$ of $Q$, and a corresponding $G$-equivariant Lie algebroid $T_\F Q=\ker(T\Phi)\subset TQ$.
For any vector bundle $V\to M$, we obtain a `fiberwise trivial' Lie algebroid  representation of $T_\F Q$ 
on the vector bundle $\Phi^*V=Q\times_M V\to Q$; 
given a $G$-representation on $V\to M$, this representation is compatible with the $G$-action on $\Phi^*V$. It defines a foliated de Rham complex 
\begin{equation}\label{eq:leafwise}%\sC^\bullet(T_\F Q,\Phi^*V)=
 \Omega^\bullet_\F(Q,\Phi^*V)=\Gamma(  \Phi^*V    \otimes \wedge^\bullet T_\F^* Q).\end{equation} 
 %with an action of $\Gamma(G)$ by automorphisms. 
 The invariant subcomplex 
%
%\begin{equation}\label{eq:invariantforms}
$\Omega^\bullet_\F(Q,\Phi^*V)^{G}$
%\end{equation}
consists of sections $\phi\in \Gamma(\Phi^*V\otimes \wedge T_\F^* Q)$ with the equivariance property $\phi(g\cdot q)=g\cdot \phi(q)$. 
(If $Q=G$ with the $G$-action by left translation, this is the complex
of left-invariant $\tz^*V$-valued forms on $G$, and is identified with $\sC(A,V)$. If $G$ is a Lie group and $Q=G/K$, this is the de Rham complex $\Omega(G/K,V)^G\cong 
\sC(\g,V)_{K-\on{basic}}$.) 
Let 
\[ \Phi_p\colon B_pG\times_M Q\to B_pG\]
be the natural projection (given by $\Phi$ if $p=0$).
The foliation of $Q$ extends to foliations $\F$ of $B_pG\times_M Q$ given by the fibers of $\Phi_p$.  

We obtain a simplicial Lie algebroid $T_\F(B_pG\times_M Q)=B_pG\times_M T_\F Q$, together 
 with a (fiberwise trivial) representation on the vector bundle $\Phi_p^*(B_pG\times_M V)\to B_pG\times_M Q$. Following Crainic \cite{cra:dif}, define a double complex 
\begin{equation}\label{eq:dq1} \sD^{p,q}(Q,V)=\sC^q(T_\F (B_pG\times_M Q),\Phi_p^*(B_pG\times _M V)
%\cong \Omega^q_\F(B_pG\times_M Q,\Phi_p^*B_pG\times_M V),
\end{equation}
with the usual simplicial differential $\delta$, and with $\d=(-1)^p\d_{CE}$. 
Its elements may be regarded as maps 
$\beta\colon B_pG\to \Omega^q_\F(Q,\Phi^*V)$, 
with 
\begin{equation}\label{eq:dqtrivializationB}
 \beta(g_1,\ldots,g_p)\in \Omega^q(\Phi^{-1}(m),V_m),\ \ \ \ m=\sz(g_p).\end{equation}
 with $\d=(-1)^p\d_{Rh}$ and $\delta$ as in \eqref{eq:deltabeta}. This double complex comes with a horizontal 
augmentation map 
$\si\colon  \Omega_\F(Q,\Phi^*V)^G\to \sD^{0,\bullet}(Q,V)$ 
given by the inclusion of invariant elements, and a vertical augmentation 
map $\sj\colon \sC(G,V)\to \sD^{\bullet,0}(Q,V)=\Gamma(\Phi_p^*(B_pG\times_M V))
$ given by  pullback under the  bundle projection $B_pG\times_M Q\to B_pG$.
For the total complex this gives  cochain maps
\[ \sC^\bullet(G,V)\stackrel{\sj}{\lra} \on{Tot}^\bullet(\sD(Q,V))\stackrel{\si}{\longleftarrow} \Omega^\bullet_\F(Q,\Phi^*V)^G.\]
%We are interested in cochain maps between $\sC(G,V)$ and $\Omega_\F(Q,\Phi^*V)^G$, obtained by reversing one of these arrows. 

%The augmentation map $\si$ is the inclusion of $G$-invariant foliated sections, while $\sj$ is the pullback map $\Phi_p^*$ from sections of $B_pG\times_M V\to B_pG$ to sections of 
%$\Phi_p^*B_pG\times_M V=B_pG\times_M \Phi^*V\to B_pG\times_M Q$. 

\subsection{Differentiation}
Assuming that the $G$-action on $Q$ is proper, we may choose a  properly supported normalized Haar distribution $\mu=\{\mu_x\}$ for $G\ltimes Q\rra Q$. It determines a horizontal homotopy $\h$ on the double complex $\sD^{\bullet,\bullet}(Q,V)$; thus $[\delta,\h]=1-\si\circ \sp$, where
%\[ \sp\colon \sD^{0,\bullet}(Q,V)=\Omega_\F(Q,\Phi^*V)\to 
%\Omega_\F(Q,\Phi^*V)^G\]
$\sp$ is the averaging map with respect to $\mu$. These are given by \eqref{eq:muhomotopy} and \eqref{eq:muprojection}, replacing $G$ with $G\ltimes Q$ and $V$ with $\Phi^*V$. Explicitly:
\begin{equation}\label{eq:hpsi}
 (\h\beta)(g_1,\ldots,g_{p-1})_x=(-1)^p\int_{\tz(a)=m} a\cdot \beta(g_1,\ldots,g_{p-1},a)_{a^{-1}\cdot x}\ \mu_x(a)\end{equation}
for $m=\sz(g_{p-1})$ and all $x\in \Phi^{-1}(m)$, and 
\[ (\sp\phi)_x=\int_{\tz(a)=\Phi(x)} a\cdot \phi_{a^{-1}\cdot x}\ \mu_x(a).\]
for $\phi\in \sD^{0,q}(Q,V)=\sC^q(T_\F Q,\Phi^*V)$. The Perturbation Lemma \ref{lem:perturbed} defines a homotopy inverse $\sp'=\sp\circ (1+\d\h)^{-1}$ to $\si$, and a  cochain map
\begin{equation}\label{eq:veq}
\VE_Q= \sp\circ (1+\d\h)^{-1}\circ \sj\colon \sC^\bullet(G,V)\to \Omega^\bullet_\F(Q,\Phi^*V)^G.
\end{equation}
%
%which on degree $p$ elements is a zig-zag $(-1)^p \sp\circ (\d \h)^p\circ \sj$. 

\begin{example}
If $Q=G$ with the left-action of $G$, and using the Haar distribution from Example \ref{ex:actionhaar} \eqref{it:1}, we recover the homotopy operator $\h$ and projection $\sp$ from Section \ref{sec:vanest}. As we saw, this leads to 
the  van Est differentiation map $\VE_G$ of Weinstein-Xu. 
\end{example}

\begin{example}\label{ex:gkexample}
Let $G$ be a Lie group, $K$ a compact Lie subgroup, and $Q=G/K$. Let $\mu$ be the Haar distribution from Example \ref{ex:actionhaar} \eqref{it:2}, thus $\mu_{gK}(a)=\delta_K(g^{-1}a)$. Making a change of variables 
$a=gk$, we recover the homotopy operator $\h$ and projection $\sp$ from Section \ref{sec:classical}, leading to the differentiation map $\VE_{G/K}$ discussed there. 
\end{example}

\subsection{Integration}\label{subsec:integrationq}
Suppose that the submersion $\Phi\colon Q\to M$ admits a section $r\colon M\to Q$, i.e, $\Phi\circ r=\id_M$. Fixing $r$ we can think of $M$ as a submanifold as a submanifold of $Q$. 
Choose a tubular neighborhood embedding $\nu(Q,M)\to 
M$, taking the fibers of the normal bundle to the $\Phi$-fibers, 
to define a germ (along $[0,1]\times M$) of a retraction $\lambda\colon [0,1]\times Q\to Q$, with 
$\lambda_{t_1t_2}=\lambda_{t_1}\circ \lambda_{t_2}$, where 
\[ \lambda_0=r\circ \Phi,\ \ \lambda_1=\id_Q,\ \ \Phi\circ \lambda_t=\Phi,\ \ \lambda_t\circ r=r.\]  
In turn, it gives a homotopy operator $T$ on the localized foliated de Rham complex $\Omega_\F(Q,\Phi^*V)_M$.  

The discussion from Section \ref{subsec:integrationG} (for the case $Q=G$) extends to this setting  
in a straightforward fashion: One obtains a  homotopy operator $\k=(-1)^p T$ on the double complex $\sD(Q,V)_M$, with $\k\circ \k=0$ and $\sq\circ \k=0$; in turn, this defines a  homotopy inverse
$\sq'=\sq\circ (1+\delta \k)^{-1}\colon \on{Tot}(\sD(Q,V))\to \sC(G,V)$ to $\sj$, 
and the resulting van Est integration map 
\[ R_Q=\sq\circ (1+\delta \k)^{-1}\circ \si
\colon \ \Omega_\F(Q,\Phi^*V)^G\to \sC(G,V).\]
is described by the formula 
\[ R_Q(\alpha)(g_1\ldots,g_p)=\int_{[0,1]^p}\gamma^{(p)}(g_1,\ldots,g_p)^*\alpha,\]
for  $\alpha\in \Omega^p_\F(Q,\Phi^*V)^G$. Here
$ \gamma^{(p)}(g_1,\ldots,g_p)\colon [0,1]^p\to Q$
is defined  similar to \eqref{eq:gammaformula}: 
%the induction starts 
%with $\gamma^{(0)}=r\colon M\to Q$, then $\gamma^{(1)}(g)(t)=\lambda_t(g\cdot r(\sz(g)))$, and so on.  
\begin{equation}\label{eq:gammaformula2}
  \gamma^{(p)}_{t_1,\ldots,t_p}(g_1,\ldots,g_p)=\lambda_{t_1}\Big(g_1\,\lambda_{t_2} \big(g_2\cdots 
 \lambda_{t_p}(g_p\cdot r(\sz(g_p)))\cdots\big)\Big).
 \end{equation}
If the $G$-action on $Q$ is furthermore proper, we also have the differentiation map $\VE_Q$, defined by the properly supported normalized Haar distribution $\mu$. 
In general, the van Est integration map $R_Q$ defined by $\lambda_t$ need not be a right inverse to the differentiation map $\on{VE}_Q$ -- the compatibility conditions of Lemma \ref{lem:backandforth} need not be satisfied, in general. One general setting where they are satisfied is the following. 

\begin{proposition}
Suppose $\mu=\{\mu_x\}$ is a properly supported normalized Haar distribution for $G\ltimes Q\rra Q$ 
with the property 
\[ \on{supp}(\mu)\subset G\times_M r(M)\]
(as a subset of $G\ltimes Q=G\times_M Q$). 
Then the conditions of Lemma \ref{lem:backandforth} are satisfied: that is,  $\h\circ \k=0,\ \sp\circ \k=0$. %and $\h\circ \sj\circ \sq=\h$ on $\sD^{p,0}(Q,V)$. 
\end{proposition}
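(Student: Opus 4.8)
The plan is to verify the two conditions $\h\circ\k=0$ and $\sp\circ\k=0$ of Lemma~\ref{lem:backandforth} directly from the explicit formulas; the lemma then immediately gives that $R_Q$ is a right inverse to $\VE_Q$ at the level of cochains.

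First I would unwind the support hypothesis. Writing an element of the action groupoid $G\ltimes Q=G\times_M Q$ as a pair $(a,x)$ consisting of an arrow $a$ together with its source $x\in Q$, the inclusion $\on{supp}(\mu)\subset G\times_M r(M)$ says precisely that each $\mu_x$ is supported on those $a\in\tz^{-1}(\Phi(x))$ whose source $a^{-1}\cdot x$ lies on the section $r(M)\subset Q$. As a consistency check, in Example~\ref{ex:gkexample} with $Q=G/K$ one has $\mu_{gK}(a)=\delta_K(g^{-1}a)$, supported on $a\in gK$, and indeed $a^{-1}\cdot gK=eK=r(\pt)$, so the condition holds automatically.

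Next I would use the defining property of the vertical homotopy: by construction $\k=(-1)^p T$, with $T$ the pullback under the retraction $\lambda$ followed by integration over $[0,1]$, and $T$ satisfies $T\beta|_M=0$. Hence for every $\beta$ the foliated form $(\k\beta)(g_1,\ldots,g_p)$ vanishes at each point of $r(M)$. The computation is now termwise: in the formula~\eqref{eq:hpsi} for $\h$, the integrand is the value of $(\k\beta)(g_1,\ldots,g_{p-1},a)$ at the point $a^{-1}\cdot x$, which on $\on{supp}(\mu_x)$ lies in $r(M)$ and where $\k\beta$ therefore vanishes; so the integral is zero and $\h\circ\k=0$. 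The identical argument applied to the formula for $\sp$, whose integrand is $(\k\phi)_{a^{-1}\cdot x}$ for $\phi$ of bidegree $(0,q)$, gives $\sp\circ\k=0$.

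I expect the only delicate point to be the bookkeeping of the first step: correctly matching the abstract condition $\on{supp}(\mu)\subset G\times_M r(M)$ with the evaluation point $a^{-1}\cdot x$ that actually appears inside $\h$ and $\sp$, and in particular recognizing that it is the \emph{source} of the integration variable $a$ that is confined to $r(M)$. Once this is pinned down the vanishing is forced inside the integral, and no further calculation is required.
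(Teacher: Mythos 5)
Your proof is correct and takes essentially the same route as the paper's: both reduce the support hypothesis to the pointwise condition $\on{supp}(\mu_x)\subseteq\{a\in G\mid a^{-1}\cdot x\in r(M)\}$ (the paper's \eqref{eq:strangnecondition}), invoke the vanishing of $\k\beta$ along $r(M)$ coming from $T\beta|_{M}=0$, and then read off $\h\circ\k=0$ and $\sp\circ\k=0$ by noting that the integrand in \eqref{eq:hpsi} is evaluated at $a^{-1}\cdot x$, which lies in $r(M)$ on the support of $\mu_x$. Your identification of $a^{-1}\cdot x$ as the source of the arrow in $G\ltimes Q$ is exactly the bookkeeping the paper performs implicitly.
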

\begin{proof}
The condition $\on{supp}(\mu)\subset G\times_M r(M)$ is equivalent to the requirement that 
for all $x\in Q$, 
\begin{equation}
\label{eq:strangnecondition} \on{supp}(\mu_x)\subseteq\{a\in G|\ a^{-1}\cdot x\in r(M)\}.\end{equation}
But$(\k\beta)(g_1,\ldots,g_p)|_{r(M)}=0$, by the usual properties of the de Rham homotopy operator. On the other hand, 
the explicit formula \eqref{eq:hpsi} for the homotopy operator $\h$ shows that for all $a\in \on{supp}(\mu_x)$, 
\[ (\k\beta)(g_1,\ldots,g_p)_{a^{-1}\cdot x}=0
\Rightarrow 
(\h\k\beta)(g_1,\ldots,g_{p-1})\big|_x=0.\]  Hence $\h\circ \k=0$ if \eqref{eq:strangnecondition} holds true, and likewise $\sp\circ \k=0$. 
%It remains to show that $\h\circ \sj\circ \sq=\h$ on $\beta\in \sD^{p,0}(Q,V)$. Consider $\beta(g_1,\ldots,g_p)\in \Gamma(\Phi^*V)$. We have that \[ (\sq\beta)(g_1,\ldots,g_p)_m=\beta(g_1,\ldots,g_p)_{r(m)}\]hence \[ (\sj \sq\beta)(g_1,\ldots,g_p)_{x}=\beta(g_1,\ldots,g_p)_{r(\Phi(x))}.\]The integral describing $\h$ involves the values \[ (\sj \sq\beta)(g_1,\ldots,a)_{a^{-1}\cdot x}=\beta(g_1,\ldots,g_p)_{r(\Phi(a^{-1}\cdot x))}.\]of this expression, but the Haar measure is non-zero only for $a$ such that $a^{-1}\cdot x\in r(M)\subset Q$. But for $a$ with this property, $r(\Phi(a^{-1}\cdot x))=a^{-1}\cdot x$. Hence we see that $\h\circ \sj\circ \sq \beta=\h \beta$.  
\end{proof}

This result `explains' Propositions \ref{prop:rightinversegk} and \ref{prop:rightinverse}: 

\begin{example}
Let $G$ be a Lie group, $K$ a compact subgroup, and $Q=G/K$.  The Haar distribution $\mu_{gK}=(l_g)_*\delta_K$ is supported in $gK\subset G$, which is the set of all $a\in G$ such that $a^{-1}gK=eK$, hence \eqref{eq:strangnecondition} holds true.
In fact, 
\[ \on{supp}(\mu)=G\times eK\subset G\ltimes G/K.\]
\end{example}

\begin{example}
Let $G\rra M$ be any Lie groupoid, and $Q=G$, with the Haar distribution $\mu_g=\delta_g$. Then \eqref{eq:strangnecondition} holds true, in fact, 
\[ \on{supp}(\mu)=G\times_M M\subset G\ltimes G.\]
\end{example}
\bigskip

\begin{appendix}
\section{Proof of Theorem \ref{th:vedifferentiation}}
Taking $Q=G$ in \eqref{eq:actions}, we have $p+1$ commuting $G$-actions on  $E_pG=B_pG\times_M G$; these commute with the principal action and descend to the actions on $B_pG$. The projection $\pi_p\colon E_pG\to M$ intertwines each of these actions with the trivial action on $M$; hence we obtain commuting $G$-actions on the vector bundles 
\begin{equation}\label{eq:isom1}
\pi_p^*(V\otimes \wedge^q A^*)=E_pG\times_M 
(V\otimes \wedge^q A^*)
\subset E_pG\times (V\otimes \wedge^q A^*),
\end{equation}
using the trivial action on the $V\otimes \wedge^q A^*$ factor. The infinitesimal action gives covariant derivatives $\nabla_\xi^{(i)}$ on 
$\sD^{p,q}(G,V)=\Gamma(\pi_p^*(\wedge^q A^*\otimes V))$; the derivatives for different $i$'s commute. They `lift' the operators $\nabla_\xi^{(i)}$ on $\sC^p(G,V)=\Gamma(B_pG\times_M V)$ introduced earlier.

\begin{lemma} \label{lem:helps}
\begin{enumerate}
	\item The maps $\sj\colon \sC^p(G,V)\to \sD^{p,0}(G,V)$ intertwine  $\nabla_\xi^{(i)}$ %on $\sC^p(G,V)$  with those on $\sD^{p,0}(G,V)$, 
	for $i=0,\ldots,p$. 
	%\[ \nabla_\xi^{(i)}\circ \sj=\sj\circ \nabla_\xi^{(i)}.\]
\item The operators  $\nabla_\xi^{(i)}$ on the double complex %$\sD^{p,\bullet}(G,V)$ 
commute with the vertical differential $\d$, and also with contractions $\iota_\zeta,\ \zeta\in \Gamma(A)$ and Lie derivatives $\L_\zeta$. 

\item The maps $\h=(-1)^p h_{p-1}^*\colon \sD^{p,q}(G,V)\to \sD^{p-1,q}(G,V)$ intertwine 
	$\nabla_\xi^{(i)}$ for $i=0,\ldots,p-1$, while 
	%satisfies 	\begin{equation}\label{eq:lie1} \nabla_\xi^{(i)}\circ \h=\h\circ \nabla_\xi^{(i)},\ i=0,\ldots,p-1\end{equation}	and 
	\begin{equation}\label{eq:lie2} 
	\ca{L}_\xi\circ \h =\h \circ (\nabla_\xi^{(p)}+\L_\xi).\end{equation}
	\end{enumerate}
\end{lemma}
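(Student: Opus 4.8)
The plan is to reduce each of the three parts to the explicit geometry of the numbered $G$-actions \eqref{eq:actions} on $E_pG$ and of the two structure maps $\sj=\kappa_p^*$ and $\h=(-1)^p h_{p-1}^*$, using repeatedly that a map of $G$-spaces equivariant for the $i$-th action automatically intertwines the infinitesimal generators $\nabla_\xi^{(i)}$. For part (a) I would check that the isomorphism \eqref{eq:kappapi} is equivariant for each action $i=0,\ldots,p$, comparing the linear action on $B_pG\times_M V$ coming from \eqref{eq:actions} (with $Q=V$) to the action on $\pi_p^*V$ assembled from the action on $E_pG$ together with the trivial action on $V$. For $i<p$ this is immediate, since neither the source $\sz(g_p)$ carrying the coefficient nor the twisting element in $(\sj f)(g_1,\ldots,g_p;g)=g^{-1}\cdot f(g_1,\ldots,g_p)$ is affected; the only substantive case is $i=p$, where the nontrivial $V$-action downstairs is precisely cancelled by the $g^{-1}$-twist. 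Differentiating this finite equivariance gives $\sj\circ\nabla_\xi^{(i)}=\nabla_\xi^{(i)}\circ\sj$.

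For part (b) the key fact is that each action $i=0,\ldots,p$ commutes with the principal action $a\cdot(g_1,\ldots,g_p;g)=(g_1,\ldots,g_p;ga^{-1})$ --- a one-line check from \eqref{eq:actions} --- and covers the trivial action on $M$ under $\pi_p$. Commuting with the principal action forces the $i$-th action to preserve the foliation $\F$, hence the Lie algebroid $\pi_p^*A=T_\F E_pG$; together with part (a) it preserves the representation on $\pi_p^*V$, and it fixes each pulled-back section $\pi_p^*\zeta$ (realized as the left-invariant field $\zeta^L$ on the last $G$-factor, untouched by the actions $i<p$ and preserved by the left translation appearing in the action $i=p$). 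Hence each finite action is a cochain automorphism of the vertical complex commuting with $\iota_\zeta$; differentiating yields $[\nabla_\xi^{(i)},\d]=0$ and $[\nabla_\xi^{(i)},\iota_\zeta]=0$, and then $[\nabla_\xi^{(i)},\L_\zeta]=0$ follows from $\L_\zeta=[\d_{CE},\iota_\zeta]$ and the graded Jacobi identity.

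For part (c), the intertwining with $\nabla_\xi^{(i)}$ for $i=0,\ldots,p-1$ reduces to checking that $h_{p-1}$ is equivariant for these actions (matching index $i$ on $E_{p-1}G$ with index $i$ on $E_pG$, using $\sz(bg)=\sz(g)$ in the borderline case $i=p-1$) and then differentiating. The main obstacle is the identity \eqref{eq:lie2}, where $h_{p-1}$ is \emph{not} equivariant for the $p$-th action: its formula $h_{p-1}(g_1,\ldots,g_{p-1};g)=(g_1,\ldots,g_{p-1},g;\sz(g))$ feeds the last slot $g$ simultaneously into the new top arrow $g_p=g$ and into the new base point $\sz(g)$, whereas the $p$-th action moves the top arrow and the last $G$-factor independently. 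This mismatch is exactly what produces a correction term rather than a clean intertwining.

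I would prove \eqref{eq:lie2} by a Leibniz-type computation. Writing $(\h\psi)(g_1,\ldots,g_{p-1};g)=(-1)^p\psi(g_1,\ldots,g_{p-1},g;\sz(g))$ and applying $\L_\xi$ --- whose geometric part differentiates $g$ along the leafwise direction $\xi^L$ --- the chain rule produces one term from the arrow slot $g_p=g$ and one from the base slot $\sz(g)$. On the other side, each of $\nabla_\xi^{(p)}$ and $\L_\xi$ also moves the last $G$-factor of $E_pG$ in the leafwise direction, and I expect these two leafwise motions to cancel after pullback by $h_{p-1}$; what then survives is matched by the arrow-slot term (giving $\nabla_\xi^{(p)}$) together with the base-slot term, where the anchor relation $\xi^L\sim_\sz-\a(\xi)$ and the representation on $V$ reconcile the motion of the base point with the coefficient terms (giving $\L_\xi$). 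The real content of the proof is precisely this cancellation of leafwise directions and the anchor/representation bookkeeping that forces the right-hand side to be $\nabla_\xi^{(p)}+\L_\xi$ rather than either summand alone.
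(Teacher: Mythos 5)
Parts (a), (b), and the equivariance half of (c) are correct and follow the same route as the paper: reduce everything to finite equivariance of $\kappa_p$, of the identification \eqref{eq:kappapi}, of the contractions, and of $h_{p-1}$, then differentiate. (Your variant for (b) --- deducing preservation of $\F$ from commutation with the principal action rather than from equivariance of $\kappa_p$ --- is equivalent, since the leaves are exactly the principal orbits; and your observation that the sections $\pi_p^*\zeta$ are fixed by all the actions is precisely the content behind the paper's phrase that ``$\iota_\zeta$ is equivariant for the $i$-th action''. Your check of the borderline case $i=p-1$ via $\sz(ag)=\sz(g)$ is also exactly right.)

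The gap is in \eqref{eq:lie2}, which is the one real assertion of the lemma. You correctly locate the mechanism (the last slot of $h_{p-1}$ feeds simultaneously into the arrow $g_p$ and into the base point $\sz(g)$), but the decisive step is only announced: ``I expect these two leafwise motions to cancel.'' Moreover, as stated the expectation is slightly off: the component of the generator $\xi^{(p)}$ on the last $G$-factor is $-\xi^R$, which is tangent to the $\sz$-fibers, \emph{not} to the leaves (the $\tz$-fibers), so there is no cancellation of two leafwise motions; rather, the two last-factor components combine to $\xi^L-\xi^R$, and the whole point is to identify this combination, along the units, with the push-forward of the base-slot motion. The paper closes this in one line with a vector-field computation: on $E_pG=B_pG\times_M G$ one has
\[ \xi^{(p)}=(\xi^{L,p},-\xi^R),\qquad \xi_{E_pG}=(0,\xi^L),\]
and the groupoid identity $\xi^L\sim_{u\circ\sz}\xi^L-\xi^R$ (where $u\colon M\to G$ is the inclusion of units) yields the relatedness
\[ \xi_{E_{p-1}G}\sim_{h_{p-1}}\xi_{E_pG}+\xi^{(p)},\]
from which \eqref{eq:lie2} follows at once by naturality of pull-back, with no Leibniz bookkeeping. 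Your chain-rule computation would, if carried out, reproduce exactly this relation; but until the claimed cancellation/recombination and the ``anchor/representation bookkeeping'' are actually verified, the proof of \eqref{eq:lie2} is not complete.
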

\begin{proof}
\begin{enumerate}\item follows from the  equivariance of the map $\kappa_p$ with respect to the $i$-th action. 
\item Since $\iota_\zeta$ is equivariant for $i$-th action, it intertwines the operators $\nabla_\xi^{(i)}$.  Next, since $\kappa_p\colon E_pG\to B_pG$ is equivariant for the $i$-th action; the foliation $\F$ of $E_pG$ is preserved; i.e., the  infinitesimal action of $\Gamma(A)$ is by infinitesimal automorphisms of the Lie algebroid $T_\F E_pG$.  It follows that the action on $\sD^{p,\bullet}(G,V)$ preserves the differential $\d_{CE}$ and hence also $\d=(-1)^p\d_{CE}$. 
Finally, since $\L_\zeta=[\d_{CE},\iota_\zeta]$ it also intertwines the Lie derivatives (for the principal $G$-action); alternatively this follows directly because the $i$-th action commutes with the principal action.

\item 
The first part follows since the maps 
\[ h_{p-1}\colon E_{p-1}G\to E_pG,\ (g_1,\ldots,g_{p-1};g)\mapsto (g_1,\ldots,g_{p-1},g;\sz(g))\] 
(see \eqref{eq:hmap}) are equivariant for the actions labeled by $i=0,\ldots,p-1$. For \eqref{eq:lie2}, we need to consider both the generating vector fields $\xi^{(p)}$ for the $p$-th $G$-action and  the generators $\xi_{E_pG}\in\mf{X}(E_pG)$ of the principal action. In terms of $E_pG=B_pG\times_M G$,
	\[ \xi^{(p)}=(\xi^{L,p},-\xi^R),\ \ \ \xi_{E_pG}=(0,\xi^L),\ \ \]
where $\xi^{L,p}$ is the left-invariant vector field sitting on the last $G$-factor of $B_pG$. 
Since $\xi^L\sim_{u\circ \sz} \xi^L-\xi^R$ (where $u\colon M\to G$ is the inclusion of units), 
we see that 
	\[ \xi_{E_{p-1}G}\sim_{h_{p-1}} \xi_{E_pG}+\xi^{(p)},\]
	which implies 
Equation \eqref{eq:lie2}. 
\qedhere 
\end{enumerate}
\end{proof}

We are now in position to give the proof of Theorem \ref{th:vedifferentiation}.

\begin{proof}[Proof of Theorem \ref{th:vedifferentiation}]
	On elements of $\sC^p(G,V)=\Gamma(B_pG\times_M V)$, we have that 
	\[ \VE_G=(-1)^p \sp\circ (\d \h)^p\circ \sj.\] 
	Using  
	\begin{align*} \sj=\kappa_p^*&\colon\sC^p(G,V)\to \sD^{p,0}(G,V),\\
	\d \h=-\d_{CE}\circ h_{i-1}^*&\colon \sD^{i,p-i}(G,V)\to \sD^{i-1,p-i+1}(G,V),\\
	\sp=u^*&\colon \sD^{0,p}(G,V)\to \sC^p(A,V),\\ 
	\end{align*}
	this means that
	$\VE_G=
	u^*\circ \d_{CE}\circ h_0^*\circ \d_{CE} \circ \cdots \circ h_{p-1}^*\circ \kappa_p^*$. 
	Given $\xi_1,\ldots,\xi_p\in \Gamma(A)$ and $\sigma\in \Gamma(B_pG\times_M V)$, we want to compute
	\[  \VE_G(\sigma)(\xi_1,\ldots,\xi_p)=
	\iota_{\xi_p}\cdots \iota_{\xi_1} u^* \d_{CE}\, h_0^*\, \d_{CE}\, h_1^*\, \cdots \d_{CE}\,  h_{p-1}^*\, \kappa_p^*\, \sigma. \]
	Our strategy is to move the variables $\xi_p,\ldots,\xi_1$ to the right, while retaining their ordering (keeping $\xi_i$ to the left of $\xi_j$ if $i>j$). The commutators of contractions $\iota_\xi$ with $\d_{CE}$  produces Lie derivatives $\L_\xi=[\iota_\xi,\d_{CE}]$. Using Lemma \ref{lem:helps} and $\L_\xi\circ \kappa_p^*=0$,	 we find 
	\[ \L_\xi\circ h_{i-1}^*\cdots \d_{CE} \circ h_{p-1}^*\circ \kappa_p^*
	=h_{i-1}^*\circ \cdots \d_{CE} \circ h_{p-1}^*\circ \kappa_p^*  \circ \wh{\nabla}_\xi^{(i)} \]
	where we introduced the hat notation 
	\[ \wh{\nabla}_\xi^{(i)}=\nabla_\xi^{(i)}+\ldots+\nabla_\xi^{(1)},\]
	corresponding to the diagonal action for the actions labeled $1,\ldots,i$. (Note that the $0$-th action is not included.) We therefore obtain 
	\begin{align*} \VE_G(\sigma)(\xi_1,\ldots,\xi_p)&=u^*h_0^*\cdots h_{p-1}^*\kappa_p^*
	\sum_{s\in\mf{S}_p}\on{sign}(s) \wh{\nabla}_{\xi_p}^{(s(p))} \cdots \wh{\nabla}_{\xi_1}^{(s(1))}\sigma\\
	&=\big(\sum_{s\in\mf{S}_p}\on{sign}(s) \wh{\nabla}_{\xi_p}^{(s(p))} \cdots \wh{\nabla}_{\xi_1}^{(s(1))}\sigma\big)\Big|_M;
	\end{align*}
    here the second equality follows since the composition $\kappa_p\circ h_{p-1}\circ \cdots \circ h_0\circ u$ is just the inclusion 
    $M\to B_pG$. 
    To complete the proof, we argue that
    \begin{equation}\label{eq:sum}
    \sum_{s\in\mf{S}_p}\on{sign}(s) \wh{\nabla}_{\xi_p}^{(s(p))} \cdots \wh{\nabla}_{\xi_1}^{(s(1))}\end{equation}
    is equal to a similar sum with all hats removed.    
    Given $s\in\mf{S}_p$, let $i=s^{-1}(p)$. 
    Since
    \[ \wh{\nabla}_{\xi_i}^{(p)}=\nabla_{\xi_i}^{(p)}+\wh{\nabla}_{\xi_i}^{(p-1)}
    %,\ \ \wh{\nabla}_{\xi_j}^{(s(j))}= \wh{\nabla}_{\xi_j}^{(s'(j))}-{\nabla}_{\xi_j}^{(s'(j))}
    \]
    we see that the product
    \[ \wh{\nabla}_{\xi_p}^{(s(p))} \cdots (\wh{\nabla}_{\xi_i}^{(p)}-\nabla_{\xi_i}^{(p)})
    \cdots 
    \wh{\nabla}_{\xi_1}^{(s(1))}\]
    coincides with the corresponding expression for the permutation $s'$, given as the 
    composition of $s$ with the transposition of the indices $p,p-1$. Since $s,s'$ have opposite signs, 
it follows that \eqref{eq:sum} does not change when we remove the hats from 
all $\wh{\nabla}_{\xi_i}^{(s(i))}$ for which $s(i)=p$. 

Having done so, and assuming $p>2$, consider for a given $s\in \mf{S}_p$ the indices $i,j$ for which $s(i)=p,\ s(j)=p-1$.  (If $p=2$, we may simply put 
$\wh{\nabla}_\xi^{(1)}={\nabla}_\xi^{(1)}$, completing the proof.) An argument  similar to the first step shows that the expression 
 \begin{equation}\label{eq:ij} \wh{\nabla}_{\xi_p}^{(s(p))} \cdots \nabla_{\xi_i}^{(p)}
 \cdots 
 (\wh{\nabla}_{\xi_j}^{(p-1)}-\nabla_{\xi_i}^{(p-1)})
\cdots 
\wh{\nabla}_{\xi_1}^{(s(1))}\end{equation}
coincides with a similar expression for the composition of $s$ with 
transposition of the indices $p-1,p-2$. (We wrote \eqref{eq:ij} for the case that $i>j$; of course, if $i<j$ the $\nabla_{\xi_i}^{(p)}$ would appear to the right of 
$\wh{\nabla}_{\xi_j}^{(p-1)}-\nabla_{\xi_j}^{(p-1)}$.) Since those permutations have opposite signs, it shows that we may also remove the hat from the factors 
$\nabla_{\xi_j}^{(s(j)}$ with $s(j)=p-1$. Removing all the hats in this manner, we have proved the 
Weinstein-Xu formula  \[ \VE_G(\sigma)(\xi_1,\ldots,\xi_p)=\big(\sum_{s\in\mf{S}_p}\on{sign}(s) {\nabla}_{\xi_p}^{(s(p))} \cdots {\nabla}_{\xi_1}^{(s(1))}\sigma\big)\Big|_M.\]
\end{proof}

\end{appendix}

\bibliographystyle{amsplain}
%\bibliography{../../Biblio/ref}

\def\cprime{$'$} \def\polhk#1{\setbox0=\hbox{#1}{\ooalign{\hidewidth
			\lower1.5ex\hbox{`}\hidewidth\crcr\unhbox0}}} \def\cprime{$'$}
\def\cprime{$'$} \def\cprime{$'$} \def\cprime{$'$} \def\cprime{$'$}
\def\polhk#1{\setbox0=\hbox{#1}{\ooalign{\hidewidth
			\lower1.5ex\hbox{`}\hidewidth\crcr\unhbox0}}} \def\cprime{$'$}
\def\cprime{$'$} \def\cprime{$'$} \def\cprime{$'$} \def\cprime{$'$}
\providecommand{\bysame}{\leavevmode\hbox to3em{\hrulefill}\thinspace}
\providecommand{\MR}{\relax\ifhmode\unskip\space\fi MR }
% \MRhref is called by the amsart/book/proc definition of \MR.
\providecommand{\MRhref}[2]{%
	\href{http://www.ams.org/mathscinet-getitem?mr=#1}{#2}
}
\providecommand{\href}[2]{#2}

\end{document}